\documentclass[11pt]{amsart}
\usepackage[margin=2.8cm]{geometry}
\usepackage{amssymb,colordvi}
\usepackage{multirow}
\usepackage{bm}
\usepackage{accents}
\usepackage{float}
\usepackage{mathrsfs}
\usepackage{hyperref}
\usepackage{graphicx,color}
\numberwithin{equation}{section}
\theoremstyle{example}
\newtheorem{example}{Example}[section]
\newtheorem{theorem}{Theorem}
\numberwithin{theorem}{section}

\newtheorem{proposition}[theorem]{Proposition}
\newtheorem{convention}[theorem]{Convention}
\newtheorem{lemma}[theorem]{Lemma}

\newtheorem{definition}[theorem]{Definition}

\newtheorem{notation}[theorem]{Notation}

\newcommand{\Z}{\mathbb{Z}}

\newcommand{\oc}[1]{\accentset{\circ}{#1}}

\DeclareMathOperator{\sign}{sign}

\DeclareMathOperator{\val}{val}
\DeclareMathOperator{\Val}{Val}
\DeclareMathOperator{\trop}{trop}

\DeclareMathOperator{\Conv}{Conv}
\DeclareMathOperator{\Coef}{Coef}
\DeclareMathOperator{\Baar}{Bar}
\DeclareMathOperator{\ord}{ord}
\DeclareMathOperator{\coef}{coef}

\newcommand{\R}{\mathbb{R}}

\DeclareMathOperator{\Int}{Int}

\DeclareMathOperator{\MV}{MV}
\usepackage{latexsym,stmaryrd}

\title[Constructing polynomial systems with many positive solutions]{Constructing polynomial systems with many positive solutions using tropical geometry}

\author{Boulos El Hilany}
\address{ Eberhard Karls Universit\"at T\"ubingen\\
          Fachbereich Mathematik\\
          Auf der Morgenstelle 10\\
          72076 T\"ubingen, Deutschland}

\email{boulos.el-hilany@uni-tuebingen.de}

\begin{document}
\maketitle
 
\begin{abstract}
The number of positive solutions of a system of two polynomials in two variables defined in the field of real numbers with a total of five distinct monomials cannot exceed 15. All previously known examples have at most 5 positive solutions. Tropical geometry is a powerful tool to construct polynomial systems with many positive solutions. The classical combinatorial patchworking method arises when the tropical hypersurfaces intersect transversally. In this paper, we prove that a system as above constructed using this method has at most $6$ positive solutions. We also show that this bound is sharp. Moreover, using non-transversal intersections of tropical curves, we construct a system as above having $7$ positive solutions.
\end{abstract}

\section{Introduction and statement of the main results} The \textit{support} of a system of (Laurent) polynomials is the set of points $w\in\mathbb{Z}^n$ corresponding to monomials $x^w=x_1^{w^1}\cdots x_n^{w^n}$ appearing in that system with non-zero coefficient. Consider a system\begin{equation}\label{eq:sys:Intro}
f_1(x_1,\ldots,x_n)=\cdots = f_n(x_1,\ldots,x_n)=0,
\end{equation} of polynomials defined in $\R[x_1^{\pm 1},\ldots,x_n^{\pm 1}]$ and supported on a set $\mathcal{W}\subset\mathbb{Z}^n$. Such real polynomial systems appear frequently in pure and applied mathematics (c.f.~\cite{BR90,GH02,By89,DRRS07}), and in many cases we are interested in studying their real solutions. It is a classical problem in algebraic geometry to count such solutions, and this turns out to be a difficult task  especially when dealing with polynomials of high degree or high number of monomials. We often restrict the problem to find an upper bound on the number of real solutions to a given system~\eqref{eq:sys:Intro}. One could apply B\'ezout's Theorem using the degrees of the polynomials, or Bernstein-Kouchnirenko's results~\cite{Ber75,Kus75} using the volumes of their Newton polytopes $\Delta(f_i)$. However, since these classical methods also hold true for solutions in the torus $(\mathbb{C}^*)^n$, one rarely obtains a precise estimation. A natural question then arises is whether there exists an upper bound on the number of real solutions to a given system~\eqref{eq:sys:Intro} that depends only on the number of points in its support $\mathcal{W}$. 

Assume that we have $|\mathcal{W}|=n+k+1$ for some positive integer $k$, and that all the solutions of~\eqref{eq:sys:Intro} in $(\mathbb{C}^*)^n$ are non-degenerate (i.e. the Jacobian matrix evaluated at each such solution has full rank). This implies that such a system has a finite number of solutions. An important breakthrough due to Khovanskii~\cite{Kh91} was proving that the maximal number of non-degenerate \textit{positive solutions} (i.e. contained in the positive orthant of $\R^n$) of~\eqref{eq:sys:Intro} is bounded above by \begin{equation*}\label{eq:KovBound}
2^{n+k \choose 2}(n+1)^{n+k}.
\end{equation*} The positive solutions  of~\eqref{eq:sys:Intro} are indeed of great interest since giving an upper bound $N_{|\mathcal{W}|}$ on their number that depends on the values $n,k\geq 1$, one deduces the upper bound $2^nN_{|\mathcal{W}|}$ on the number of real non-degenerate solutions to~\eqref{eq:sys:Intro}. Khovanskii's bound is far from being sharp since it comes as a consequence of an even bigger result involving solutions in $(\mathbb{R}_{>0})^n$ of polynomial functions in logarithms of the coordinates and monomials. Nevertheless, it is the first upper bound that is independent of the degrees and the Newton polytopes for systems~\eqref{eq:sys:Intro} and an arbitrary number $n$. 

In~\cite{BS07}, F. Bihan and F. Sottile significantly reduced Khovanskii's bound by showing that there are fewer than \begin{equation}\label{Intro:eq:bound:BS}
\frac{e^2 + 3}{4}2^{k \choose 2}n^k
\end{equation} non-degenerate positive solutions to~\eqref{eq:sys:Intro}. This new bound is asymptotically sharp in the sense that for a fixed $k$ and big enough $n$, there exist systems~\eqref{eq:sys:Intro} having $O(n^k)$ positive solutions. However the bound~\eqref{Intro:eq:bound:BS} is not sharp for systems with special structure (e.g. with prescribed number of monomials in each equation). On the other hand, sharp upper bounds on the number of positive solutions are already known in some special cases. For example, Descartes' rule of sign states that the univariate polynomial obtained from~\eqref{eq:sys:Intro} when supposing $n=1$ has the maximum of $k+1$ positive solutions (counted with multiplicities). Also, F. Bihan proved in~\cite{B07} that if $k=1$, then $n+1$ is a sharp upper bound on the number of positive solutions to~\eqref{eq:sys:Intro}. 

One of the first cases where the sharp upper bound on the number of non-degenerate positive solutions is not known is the case of a bivariate polynomial system of two equations having five distinct points in its support. It was also proven in~\cite{BS07} that a sharp bound to such a system (of type $n=k=2$ for short) is not greater than $15$. On the other hand, the best constructions had only $5$ non-degenerate positive solutions. The first such published example, made by B. Haas~\cite{H02}, is a construction consisting of two real bivariate trinomials. Other examples of such systems having 5 positive solutions were later constructed in~\cite{DRRS07}. The authors in the latter paper also showed that such systems are rare in the following sense. They study the discriminant variety of coefficients spaces of polynomial systems composed of two bivariate trinomials with fixed exponent vectors, and show that the chambers (connected components of the complement) containing systems with the maximal number of positive solutions are small.

In this paper, we consider real systems of type $n=k=2$ in their full generality (i.e. not only the case of two trinomials). The motivation behind this paper is to adopt some of the tools developed in \emph{tropical geometry} in order to construct real polynomial systems of type $n=k=2$ that give more than five positive solutions. Tropical geometry is a new domain in mathematics that is situated at the junction of fields such as toric geometry, complex or real geometry, and combinatorics~\cite{Mik06,MR05,MS15}. It turns out that Sturmfels' Theorem~\cite{S94} can be reformulated in the context of tropical geometry (see~\cite{Mik04,Rul01}). This makes the latter an effective tool to construct polynomial systems with prescribed support and many positive solutions. The principal idea is to consider a family of polynomial systems \begin{equation}\label{Eq:sys:param}
P_{1,t}(x,y)=P_{2,t}(x,y)=0,
\end{equation} of type $n=k=2$ with special 1-parametrized coefficients $a_i^j(t)$ for $(i,j)\in\{1,2\}\times\{1,\ldots,5\}$. We then associate to $P_{1,t}$ and $P_{2,t}$ \emph{tropical curves} $T_1,T_2\subset\R^2$ (see Subsection~\ref{Prel:Subs:TropPH}). These are piecewise-linear combinatorial objects that keep track of much of the information about the (parametrized) solutions of~\eqref{Eq:sys:param}. Assume first that the associated tropical curves intersect \emph{transversally} in a finite set of points $\mathcal{S}$ (i.e. the cardinality of $\mathcal{S}$ does not change after perturbations). Then, Sturmfels' generalization of Viro's Theorem (see Theorem~\ref{Prel:Th:Stur}) states that there exists a bijection between the positive solutions to the real system obtained from~\eqref{Eq:sys:param} by taking $t$ small enough, and a subset $\mathcal{S}_+\subseteq\mathcal{S}$ of \emph{positive tropical transversal points} (c.f. Definition~\ref{Prel:Def:PosPar}). Therefore, similarly to the famous Viro's combinatorial patchworking (c.f. Theorem~\ref{Prel:Th:Vir}), the construction of real polynomial systems with many positive solutions becomes a combinatorial problem. If the system~\eqref{Eq:sys:param} is of type $n=k=2$, then the number of transversal intersection points of $T_1$ and $T_2$ is bounded from above by $6$. It was previously unknown whether this upper bound can be attained. We prove that this bound is sharp and can be realized by positive transversal intersection points. 

\begin{proposition}\label{Prop:Intro}
There exist two plane tropical curves $T_1$ and $T_2$ defined by equations containing a total of five monomials and which have six positive transversal intersection points.
\end{proposition}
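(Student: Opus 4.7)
The plan is to construct $T_1$ and $T_2$ explicitly in three stages: a choice of support, a choice of tropical coefficients yielding six transversal intersections, and a choice of sign pattern making all six positive.

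In the first stage, I fix a five-element set $\mathcal{W}\subset\Z^2$ and supports $A_1,A_2\subseteq\mathcal{W}$ with $A_1\cup A_2=\mathcal{W}$, chosen so that the mixed volume $\MV(\conv A_1,\conv A_2)$ attains the bound $6$. The simplest choice is $A_1=A_2=\mathcal{W}$ with $\conv\mathcal{W}$ a lattice polygon of area $3$ (e.g.\ a quadrilateral of area $3$ containing exactly one interior lattice point of $\mathcal{W}$). In the second stage, I pick real valuations $v_i^j$ for the coefficients $a_i^j(t)$, which induce regular subdivisions $\mathcal{S}_1$ of $\conv A_1$ and $\mathcal{S}_2$ of $\conv A_2$. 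The tropical curves $T_1,T_2$ are their combinatorial duals, and their transversal intersection points are in bijection with the mixed cells of area $1$ in the mixed subdivision of $\conv A_1+\conv A_2$. I would then tune the $v_i^j$ so that exactly six such mixed parallelograms appear and no non-transversal intersections occur; this can be verified by plotting $T_1,T_2$ in $\R^2$.

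The third stage is the main obstacle. For each of the six intersection points, dual to an edge $[a,b]$ of $\mathcal{S}_1$ and an edge $[c,d]$ of $\mathcal{S}_2$, positivity in the sense of Definition~\ref{Prel:Def:PosPar} demands the sign-change conditions $\eps_1^a\eps_1^b=-1$ and $\eps_2^c\eps_2^d=-1$ on the real signs $\eps_i^j\in\{\pm 1\}$ of the coefficients. This produces twelve sign constraints on at most ten sign variables, and the system is not solvable for generic choices in the first two stages. The heart of the proof is therefore a combinatorial search: one must tune $\mathcal{W}$, the distribution of monomials between $P_{1,t}$ and $P_{2,t}$, and the valuations $v_i^j$ so that the resulting twelve sign constraints become mutually consistent, and then exhibit an explicit sign vector $(\eps_i^j)\in\{\pm 1\}^{10}$ satisfying them. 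Once such explicit data are produced, the six transversal intersections of $T_1$ and $T_2$ are all positive by construction, proving the proposition; the remainder of the write-up is then a matter of displaying the two tropical curves with the six positive intersections visibly marked.
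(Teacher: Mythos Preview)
Your proposal correctly identifies the architecture of the argument---choose supports, tune valuations to force six transversal mixed cells, then solve the sign constraints---but it stops precisely where the proof must begin. For an existence statement of this kind, the proof \emph{is} the explicit witness: the only content of the paper's argument is the display of a concrete system (two polynomials with four monomials each, sharing three, for a total of five) together with a picture of the two tropical curves meeting in six positive transversal points. You acknowledge that ``once such explicit data are produced'' the proposition follows, yet you never produce the data; what remains is a description of a search, not its outcome. In particular, you correctly flag the sign-consistency system as the crux, but you neither solve it nor argue that it is solvable for some choice of $\mathcal{W}$ and valuations.

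A secondary point: your default choice $A_1=A_2=\mathcal{W}$ is not the one that works in the paper. The paper's example has $|A_1|=|A_2|=4$ with $|A_1\cap A_2|=3$; the two dual subdivisions are genuinely different triangulations of different quadrilaterals, and this asymmetry is what makes the twelve sign conditions collapse to a consistent system. If you insist on $A_1=A_2$, the two tropical curves share the same underlying graph (only shifted), and arranging six transversal intersections that are simultaneously positive for \emph{both} sign distributions becomes harder---you would at minimum need to verify feasibility rather than assume it. Also, your claim that transversal intersections biject with ``mixed cells of area~$1$'' is slightly off: they biject with mixed parallelograms of any area, the area being the local intersection multiplicity; to get six \emph{distinct} points you need six mixed cells, but each may have area~$>1$.
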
 Due to Theorem~\ref{Prel:Th:Stur}, the construction made for proving the latter result also gives a construction of a real polynomial system of type $n=k=2$ that has six positive solutions. Furthermore it is clear from Theorem~\ref{Prel:Th:BihDisc} that one cannot hope to improve the result in Proposition~\ref{Prop:Intro} when restricting to polynomial systems of type $n=k=2$ with tropical curves intersecting transversally.

Consequently, in order to obtain a better construction, we consider real parametrized polynomial systems~\eqref{Eq:sys:param} of type $n=k=2$ whose tropical curves $T_1$ and $T_2$ intersect in a non-empty set that does \emph{not} consist of transversal points. A consequence of an important result due to Kapranov~\cite{Kap00} is that the set $T_1\cap T_2$ contains the \emph{tropicalizations} of the solutions of~\eqref{Eq:sys:param}. For each linear piece $\xi$ of a connected component of $T_1\cap T_2$, we associate a real \emph{reduced} polynomial system extracted from~\eqref{Eq:sys:param} (see Definition~\ref{Def:RedSys}) and prove that it encodes all \emph{positive} solutions $(\alpha_1(t),\alpha_2(t))$ of~\eqref{Eq:sys:param} which tropicalize in $\xi$ (by positive, we mean that the first-order terms of $\alpha_1(t)$ and $\alpha_2(t)$ have positive coefficients). If $\xi$ is of dimension zero, then results in~\cite{Kat09,Rab12,OP13} and~\cite{Br-deMe11} show that $\xi$ lifts to solutions to~\eqref{Eq:sys:param}, and then such non-degenerate solutions $(\alpha_1(t),\alpha_2(t))$ which are positive can be estimated by computing the real reduced system of~\eqref{Eq:sys:param} with respect to $\xi$ (see Proposition~\ref{Prop:fromR:toK}). If $\xi$ has dimension 1, then a method was developed in~\cite{E16} to compute the positive solutions that tropicalize in the relative interior of $\xi$. The latter methods for non-transversal linear pieces of dimension $0$ and $1$ are sufficient to construct a real polynomial system of type $n=k=2$ having more than six positive solutions. Namely, we prove our main result.

\begin{theorem}\label{Th:Main}
There exists a real polynomial system of type $n=k=2$ that has seven solutions in $(\R_{>0})^2$.
\end{theorem}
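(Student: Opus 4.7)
The plan is to build on the construction underlying Proposition~\ref{Prop:Intro} by perturbing its coefficients so that the resulting tropical curves $T_1$ and $T_2$ intersect both in transversal positive points \emph{and} along a positive-dimensional non-transversal linear piece $\xi$, and then combine the two contributions to reach seven positive solutions. Concretely, I would start from a family~\eqref{Eq:sys:param} of type $n=k=2$ giving the six transversal positive intersections guaranteed by Proposition~\ref{Prop:Intro}, and then deform the parametric coefficients $a_i^j(t)$, keeping the support fixed at five monomials, so that two edges of $T_1$ and $T_2$ coincide along a common segment $\xi$ while as many transversal positive intersection points as possible survive elsewhere in $T_1\cap T_2$.

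Next, to the segment $\xi$ I would attach the real reduced polynomial system of Definition~\ref{Def:RedSys}, which by the method of~\cite{E16} puts the positive solutions of~\eqref{Eq:sys:param} tropicalizing in the relative interior of $\xi$ in bijection with the positive roots of that reduced system. Because $\xi$ has dimension one, the reduced system has a very restricted structure (essentially one equation in one variable after eliminating the direction along $\xi$), so its positive roots can be controlled via Descartes' rule of signs, and the leading-term signs of the $a_i^j(t)$ can be tuned to force the desired number of positive roots on $\xi$. The positive transversal intersection points outside $\xi$ are, in turn, counted by Theorem~\ref{Prel:Th:Stur} and lifted to actual positive solutions via Proposition~\ref{Prop:fromR:toK}.

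Combining these two sources, one obtains a total of seven positive solutions for the real system extracted from~\eqref{Eq:sys:param} by taking $t>0$ small enough. The main obstacle is the balancing act between the transversal and non-transversal contributions: forcing $T_1$ and $T_2$ to share an edge generically collapses some of the original six transversal positive intersections, so the exponent data and the signs of the leading coefficients have to be chosen very carefully so that the loss on the transversal side is strictly smaller than the gain from $\xi$. In practice this means exhibiting one explicit parametric family, drawing its tropical curves, locating the transversal positive intersections by the combinatorial positivity criterion of Definition~\ref{Prel:Def:PosPar}, computing the reduced system on $\xi$ by hand, and verifying its positive roots with Descartes' rule. Once such a family is displayed, Theorem~\ref{Th:Main} follows directly by taking $t$ sufficiently small.
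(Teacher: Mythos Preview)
Your plan diverges from the paper's construction in a way that is not just a different route but contains a real gap. The paper does \emph{not} obtain seven positive solutions by keeping most of the six transversal positive points from Proposition~\ref{Prop:Intro} and adding a few more from a shared edge. Instead, after passing to a normalized system (Lemma~\ref{L:Normal}) the tropical curves are forced to meet at a single non-transversal point $v_0$ of type~(III) (a common \emph{vertex}, not a common edge), and the reduced system at $v_0$ is a genuine two-dimensional system of two trinomials. The whole weight of the construction lies in arranging this trinomial reduced system to have \emph{five} positive solutions, borrowing an explicit example from~\cite{DRRS07}; the remaining two positive solutions come from one transversal point in a $2$-cone and one valuation on a $1$-cone. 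The decomposition is $5+1+1$, not ``almost $6$'' transversal plus a small correction from an edge.

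Your proposed mechanism---force $T_1$ and $T_2$ to share a $1$-dimensional segment $\xi$ and use Descartes on the essentially univariate reduced system there---runs into two problems. First, once one normalizes the system (which preserves the count of positive solutions), Theorem~\ref{Th.transv} bounds the number of transversal intersection points by three, one per open $2$-cone, so there is no configuration in which ``most of the six'' transversal positive points survive after normalization; the six-point picture of Proposition~\ref{Prop:Intro} does not persist under the reductions needed to analyze non-transversal contributions. Second, a type~(I) cell yields a reduced system that is far more constrained than a type~(III) cell: you correctly note it collapses to one equation in one variable, but you give no argument (and none is available from Descartes alone under the five-monomial constraint) that this single equation can supply enough positive roots to compensate for the transversal points you necessarily lose. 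The paper's key idea---that a type~(III) vertex gives a \emph{bivariate} trinomial reduced system capable of five positive solutions---is precisely what your outline is missing. Without it, the balancing act you describe as ``the main obstacle'' cannot be resolved, and the sketch does not constitute a proof.
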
 The strategy behind the construction of a system satisfying Theorem~\ref{Th:Main} goes as follows. First, we show that to any system~\eqref{Eq:sys:param} of type $n=k=2$, one can associate a \emph{normalized system}, which is easier to deal with, that has the same number of non-degenerate positive solutions as~\eqref{Eq:sys:param}. A case-by-case analysis was made in~\cite{E16} to identify the few classes of candidates of normalized systems that have more than six positive solutions. The construction described in the present paper is based on one such candidate.

This paper is organized as follows. We introduce in Section~\ref{Sec:Tropical} some basic notions of tropical geometry. In Section~\ref{Sec:Transversal}, we give a description of the tropical reformulation of Viro's Patchworking Theorem and its generalization followed by the proof of Proposition~\ref{Prop:Intro}. Finally, Section~\ref{Sec:Non-Transversal} is devoted to the proof of Theorem~\ref{Th:Main} .

\textbf{Acknowledgements:} I am very grateful to Fr\'ed\'eric Bihan for fruitful discussions and guidance. I also would like to thank Pierre-Jean Spaenlehauer for computations that approximated the real positive solutions to the real system that was constructed to prove Theorem~\ref{Th:Main}.

\section{A brief introduction to tropical geometry}\label{Sec:Tropical} We state in this section some of the well-known facts about tropical geometry, much of the exposition and notations in this section are taken from~\cite{Br-deMe11,BB13,Ren15}. For more information about the topic, the reader may refer to~\cite{MS15,IMS09} for example.

\begin{definition}
A \textbf{polyhedral subdivision} of a convex polytope $\Delta\subset\R^n$ is a set of convex polytopes $\{\Delta_i\}_{i\in I}$ such that

\begin{itemize}

 \item $\cup_{i\in I}\Delta_i=\Delta$, and\\
 
 \item if $i,j~\in I$, then if the intersection $\Delta_i\cap\Delta_j$ is non-empty, it is a common face of the polytope $\Delta_i$ and the polytope $\Delta_j$.
\end{itemize}
\end{definition}

\begin{definition}\label{Prel:Def:Regular}
Let $\Delta$ be a convex polytope in $\R^n$ and let $\tau$ denote a polyhedral subdivision of $\Delta$ consisting of convex polytopes. We say that $\tau$ is \textbf{regular} if there exists a continuous, convex, piecewise-linear function $\varphi:~\Delta\rightarrow \R$ which is affine linear on every simplex of $\tau$.
\end{definition} Let $\Delta$ be an integer convex polytope in $\R^n$ and let $\phi:~\Delta~\cap~\mathbb{Z}^n\rightarrow \R$ be a function. We denote by $\hat{\Delta}(\phi)$ the convex hull of the graph of $\phi$, i.e., $$\hat{\Delta}(\phi):=\Conv\left(\{(i,\phi(i))~\in\R^{n+1}~|~i\in\Delta\cap\mathbb{Z}^n\}\right).$$ Then the polyhedral subdivision of $\Delta$, induced by projecting the union of the lower faces of $\hat{\Delta}(\phi)$ onto the first $n$ coordinates, is regular. We will shortly describe $\phi$ using the polynomials that we will be working with.

\subsection{Tropical polynomials and hypersurfaces}\label{Prel:Subs:TropPH}
A \textbf{locally convergent generalized Puiseux series} is a formal series of the form $$a(t)=\underset{r\in R}{\sum} \alpha_rt^r ,$$ where $R\subset\mathbb{R}$ is a well-ordered set, all $\alpha_r\in\mathbb{C}$, and the series is convergent for $t>0$ small enough. 
We denote by $\mathbb{K}$ the set of all locally convergent generalized Puiseux series. It is naturally a field of characteristic 0, which turns out to be algebraically closed. 

\begin{notation}
Let $\coef(a(t))$ denote the coefficient of the first term of $a(t)$ following the increasing order of the exponents of $t$. We extend $\coef$ to a map $\Coef:~\mathbb{K}^n\rightarrow\mathbb{R}^n$ by taking $\coef$ coordinate-wise, i.e. $\Coef(a_1(t),\ldots , a_n(t))=(\coef(a_1(t)),\ldots , \coef(a_n(t)))$
\end{notation}

 An element $a(t)=\underset{r\in R}{\sum} \alpha_rt^r$ of $\mathbb{K}$ is said to be \textbf{real} if $\alpha_r\in\mathbb{R}$ for all $r$, and \textbf{positive} if $a(t)$ is real and $\coef(a(t))>0$. Denote by $\mathbb{RK}$ (resp. $\mathbb{RK}_{>0}$) the subfield of $\mathbb{K}$ composed of real (resp. positive) series. Since elements of $\mathbb{K}$ are convergent for $t>0$ small enough, an algebraic variety over $\mathbb{K}$ (resp. $\mathbb{RK}$) can be seen as a one-parametric family of algebraic varieties over $\mathbb{C}$ (resp. $\mathbb{R}$). The field $\mathbb{K}$ has a natural non-archimedian valuation defined as follows:

$$ 
  \begin{array}{lccc}
    \displaystyle \val: & \mathbb{K} & \longrightarrow & \mathbb{R}\cup\{-\infty\} \\
    
    \displaystyle \ & 0 & \longmapsto  & -\infty \\
    
     \displaystyle \ & \underset{r\in R}{\sum} \alpha_rt^r\neq 0 & \longmapsto  & -\min_R\{r\ |\ \alpha_r\neq 0\}. \\
  \end{array}
$$ The valuation extends naturally to a map $\Val:~\mathbb{K}^n\rightarrow(\mathbb{R}\cup\{-\infty\})^n$ by evaluating $\val$ coordinate-wise, i.e. $\Val(z_1,\ldots , z_n)=(\val(z_1),\ldots , \val(z_n))$. We shall often use the notation $\val$ and $\Val$ when the context is a \textit{tropical polynomial} or a \textit{tropical hypersurface}. On the other hand, define $\ord:=-\val$, with $\ord(0)=+\infty$, and use it as a notation when the context is an element in $\mathbb{RK}^n$ or a polynomial in $\mathbb{RK}[z_1^{\pm 1},\ldots,z_2^{\pm 1}]$.

\begin{convention}\label{Conv:zeroOrd}
For any $s\in\mathbb{K}$, we have $\coef(s)=0\Leftrightarrow s=0$ and $\ord(s)=+\infty\Leftrightarrow s=0$
\end{convention}

Consider a polynomial $$f(z):=\sum_{w\in\mathcal{W}}c_wz^{w}\in\mathbb{K}[z_1^{\pm 1},\ldots,z_n^{\pm 1}],$$ with $\mathcal{W}$ a finite subset of $\mathbb{Z}^n$ and all $c_w$ are non-zero. Let $V_f=\{z\in(\mathbb{K^*})^2~|~f(z)=0\}$ be the zero set of $f$ in $(\mathbb{K}^*)^n$ 


The \textbf{tropical hypersurface} $V^{\trop}_f$ associated to $f$ is the closure (in the usual topology) of the image under $\Val$ of $V_f$: $$V^{\trop}_f=\overline{\Val(V_f)}\subset\R^n,$$ endowed with a \textit{weight function} which we will define later. There are other equivalent definitions of a tropical hypersurface. Namely, define 

$$ 
  \begin{array}{lccc}
    \displaystyle \nu: & \mathcal{W} & \longrightarrow & \mathbb{R} \\
    
    \displaystyle \ & w & \longmapsto  & \ord(c_w). \\
    
  \end{array}
$$ Its \textbf{Legendre transform} is a piecewise-linear convex function $$ 
  \begin{array}{lccc}
    \displaystyle \mathcal{L}(\nu): & \R^n & \longrightarrow & \mathbb{R} \\
    
    \displaystyle \ & x & \longmapsto  &\displaystyle \max_{w\in\mathcal{W}}\{\langle x, w\rangle - \nu(w)\}, \\
    
  \end{array}
$$ where $\langle ~,~\rangle:~\R^n\times\R^n\rightarrow\R$ is the standard eucledian product. The set of points $x\in\R^n$ at which $\mathcal{L}(\nu)$ is not differentiable is called the \textbf{corner locus} of $\mathcal{L}(\nu)$.
 We have the fundamental Theorem of Kapranov~\cite{Kap00}.

\begin{theorem}[Kapranov]\label{Th:Kap}
A tropical hypersurface $V_f^{\trop}$ is the corner locus of $\mathcal{L}(\nu)$.
\end{theorem}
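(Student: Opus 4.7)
The plan is to establish both inclusions in the equality $V_f^{\trop} = \{\text{corner locus of }\mathcal{L}(\nu)\}$. The key combinatorial characterization is that $x \in \R^n$ lies in the corner locus exactly when the maximum $\max_{w \in \mathcal{W}}\{\langle x, w\rangle - \nu(w)\}$ is attained by at least two distinct $w \in \mathcal{W}$, since $\mathcal{L}(\nu)$ is affine on any region where a unique $w$ dominates.

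For the inclusion $\overline{\Val(V_f)} \subseteq \{\text{corner locus}\}$, I would take any $z \in V_f$, set $x = \Val(z)$, and use the formula $\val(c_w z^w) = -\nu(w) + \langle x, w\rangle$. Hence $c_w z^w$ has maximal valuation (equivalently, minimal $t$-exponent) precisely when $w$ attains $\mathcal{L}(\nu)(x)$. Since $f(z)=0$, the lowest-order terms in $t$ must cancel; if only one index achieved the maximum, no cancellation would occur. So at least two indices do, placing $x$ in the corner locus. Closedness of the corner locus then yields the inclusion.

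For the reverse inclusion, given $x$ in the corner locus, I would produce $z \in V_f$ with $\Val(z)=x$ as follows. Let $W_x = \{w \in \mathcal{W} : \langle x,w\rangle - \nu(w) = \mathcal{L}(\nu)(x)\}$, which has size at least two. Writing each coefficient as $c_w = \coef(c_w)\,t^{\nu(w)}(1 + O(t^{>0}))$ and seeking $z_i = u_i\,t^{-x_i}$ with $u_i \in \K$ a unit, the equation $f(z)=0$ becomes, at the lowest $t$-order,
$$f_x(\Coef(u)) = 0, \qquad f_x(y) := \sum_{w \in W_x}\coef(c_w)\,y^w \in \C[y_1^{\pm 1},\ldots,y_n^{\pm 1}].$$
Because $f_x$ has at least two distinct monomials, it is a non-monomial Laurent polynomial and hence its zero locus in $(\C^*)^n$ is a nonempty hypersurface, so one may choose a smooth point $y_0 \in (\C^*)^n$ of $\{f_x=0\}$. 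I would then construct $u$ by recursively solving for its higher-order Puiseux coefficients: each higher-order contribution to $f(z)$ yields a linear equation whose leading coefficient is a non-vanishing partial derivative of $f_x$ at $y_0$, which can be inverted to determine the next coefficient of $u$. The resulting series lies in $(\K^*)^n$ because the set of exponents generated is well-ordered.

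The main obstacle is precisely this lifting step: turning a root $y_0$ of the initial polynomial $f_x$ over the residue field $\C$ into an honest root of $f$ in $(\K^*)^n$ with prescribed valuation $x$. This is essentially a Hensel/Newton--Puiseux recursion adapted to the valuation ring of $\K$, and it succeeds only if $y_0$ is chosen smooth; at a singular point the implicit-function step fails and one would encounter an algebraic obstruction to lifting. The smoothness requirement is harmless because $f_x$ is nonzero, so the smooth locus of $\{f_x=0\}$ is a dense open subset and in particular nonempty, guaranteeing the existence of a suitable $y_0$ and completing the proof.
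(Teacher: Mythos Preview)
The paper does not give its own proof of this statement; Theorem~\ref{Th:Kap} is quoted as Kapranov's result with a reference to~\cite{Kap00} and used as a black box thereafter. There is therefore nothing in the paper to compare your argument against.

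That said, your sketch is the standard route and is essentially correct. The forward inclusion is exactly as you describe. For the lifting step in the reverse inclusion, one refinement makes the argument cleaner and removes the well-orderedness worry you flag at the end: instead of recursively solving for all $n$ coordinates of $u$, reindex so that $\partial_n f_x(y_0)\neq 0$, set $u_i=(y_0)_i\in\C^*$ for $i<n$, and regard $f(u_1t^{-x_1},\ldots,u_{n-1}t^{-x_{n-1}},t^{-x_n}w)=0$ as a \emph{univariate} polynomial equation in $w$ over $\K$. Since $\K$ is algebraically closed, this equation has all its roots in $\K$ automatically; the Newton polygon (equivalently Hensel's lemma, the valuation ring of an algebraically closed valued field being henselian) then shows that one of these roots satisfies $\ord(w)=0$ and $\coef(w)=(y_0)_n$, because $(y_0)_n$ is a simple root of the reduction $f_x((y_0)_1,\ldots,(y_0)_{n-1},\,\cdot\,)$. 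This yields the desired $z\in V_f$ with $\Val(z)=x$ without any hand-built recursion on Puiseux coefficients.
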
 Tropical hypersurfaces can also be described as algebraic varieties over the \textit{tropical semifield} $(\R\cup \{-\infty\}, ``+", ``\times")$, where for any two elements $x$ and $y$ in $\R\cup \{-\infty\}$, one has $$``x+y"=\max(x,y)\quad \text{and}\quad ``x\times y"=x+y.$$ A multivariate tropical polynomial is a polynomial in $\R[x_1,\ldots,x_n]$, where the addition and multiplication are the tropical ones. Hence, a tropical polynomial is given by a maximum of finitely many affine functions whose linear parts have integer coefficients and constant parts are real numbers. The tropicalization of the polynomial $f$ is a tropical polynomial $$f_{\trop}(x)=\max_{w\in\mathcal{W}}\{\langle x, w\rangle +\val(c_w)\}.$$ This tropical polynomial coincides with the piecewise-linear convex function $\mathcal{L}(\nu)$ defined above. Therefore, Theorem \ref{Th:Kap} asserts that $V_f^{\trop}$ is the corner locus of $f_{\trop}$. Conversely, the corner locus of any tropical polynomial is a tropical hypersurface.

\begin{example} 
\normalfont A polynomial $f\in\mathbb{RK}[z_1,z_2]$ with equation 

\begin{equation}\label{eq:Ex:Deg3}
f(z_1,z_2)=-t +z_1 -tz_1^2-z_1z_2+z_2 +tz_2^2,
\end{equation} its associated tropical polynomial is $$f_{trop}(x_1,x_2)=\max\{-1,x_1,2x_1-1,x_1+x_2,x_2,2x_2-1\},$$ and the corresponding tropical hypersurface is shown in Figure~\ref{Fig:ExampleCon} on the left.

\begin{figure}[H]
\centering
\includegraphics[scale=0.7]{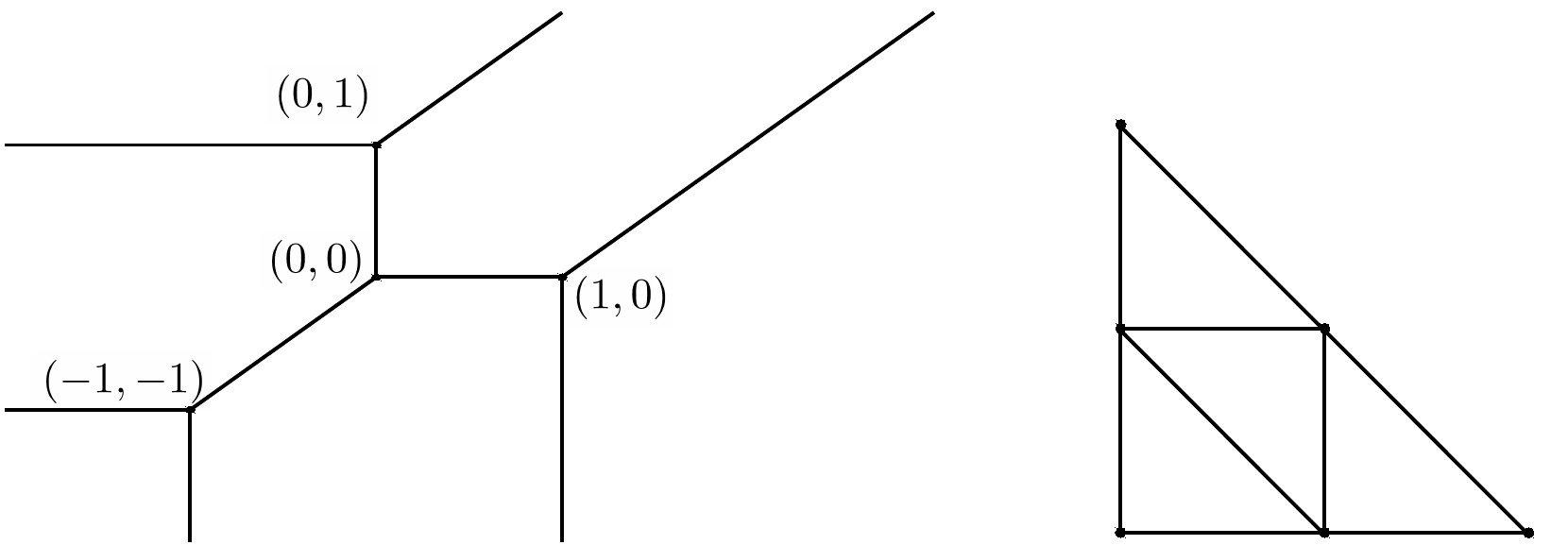}
\caption{An example of a tropical conic in $\mathbb{R}^2$, and its dual subdivision.}
\label{Fig:ExampleCon}
\end{figure}

\end{example}

\subsection{Tropical hypersurfaces and subdivisions}\label{Prel:Subsec:TropHS}

A tropical hypersurface induces a subdivision of the Newton polytope $\Delta(f)$ in the following way (see right side of Figure~\ref{Fig:ExampleCon}). The hypersurface $V_f^{\trop}$ is a $(n-1)$-dimensional piecewise-linear complex which induces a polyhedral subdivision $\Xi$ of $\R^n$. We will call \textbf{cells} the elements of $\Xi$. Note that these cells have rational slopes. The $n$-dimensional cells of $\Xi$ are the closures of the connected components of the complement of $V^{\trop}_f$ in $\R^n$. The lower dimensional cells of $\Xi$ are contained in $V^{\trop}_f$ and we will just say that they are cells of $V_f^{\trop}$. 

Consider a cell $\xi$ of $V_f^{\trop}$ and pick a point $x$ in the relative interior of $\xi$. Then the set $$\mathcal{I}_x=\{w\in\mathcal{W}~|~\exists~ x\in\mathbb{R}^n,~f_{trop}(x)=\langle x,w\rangle + \val(c_w)\}$$ is independent of $x$, and denote by $\Delta_{\xi}$ the convex hull of this set. All together the polyhedra $\Delta_{\xi}$ form a subdivision $\tau$ of $\Delta(f)$ called the \textbf{dual subdivision}, and the cell $\Delta_{\xi}$ is called the \textbf{dual} of $\xi$. Both subdivisions $\tau$ and $\Xi$ are dual in the following sense. There is a one-to-one correspondence between $\Xi$ and $\tau$, which reverses the inclusion relations, and such that if $\Delta_\xi\in\tau$ corresponds to $\xi\in\Xi$ then\\

\begin{enumerate}

 \item $\dim\xi + \dim\Delta_\xi=n$,\\
 
 \item the cell $\xi$ and the polytope $\Delta_\xi$ span orthogonal real affine spaces, and\\
 
 \item the cell $\xi$ is unbounded if and only if $\Delta_\xi$ lies on a proper face of $\Delta(f)$.\\

\end{enumerate}

Note that $\tau$ coincides with the regular subdivision of Definition~\ref{Prel:Def:Regular}. Indeed, let $\hat{\Delta}(f)\subset\R^n\times\R$ be the convex hull of the points $(w,\nu(w))$ with $w\in\mathcal{W}$ and $\nu(w)=\ord(c_w)$. Define $$ 
  \begin{array}{lccc}
    \displaystyle \hat{\nu}: & \Delta(f) & \longrightarrow & \mathbb{R} \\
    
    \displaystyle \ & x & \longmapsto  & \min\{y~|~(x,y)\in\hat{\Delta}(f)\}.\\
    
  \end{array}
$$ Then, the the domains of linearity of $\hat{\nu}$ form the dual subdivision $\tau$.

 Consider a facet (face of dimension $n-1$) $\xi$ of $V_f^{\trop}$, then $\dim\Delta_\xi=1$ and we define the \textbf{weight} of $\xi$ by $w(\xi):=Card(\Delta_\xi\cap\mathbb{Z}^n)-1$. Tropical varieties satisfy the so-called balancing condition. Since in this paper, we only work with tropical curves in $\R^2$, we give here this property only for this case. We refer to~\cite{Mik06} for the general case. Let $T\subset\R^n$ be a tropical curve, and let $v$ be a vertex of $T$. Let $\xi_1,\ldots,\xi_l$ be the edges of $T$ adjacent to $v$. Since $T$ is a rational graph, each edge $\xi_i$ has a primitive integer direction. If in addition we ask that the orientation of $\xi_i$ defined by this vector points away from $v$, then this primitive integer vector is unique. Let us denote by $u_{v,i}$ this vector.

\begin{proposition}[Balancing condition]\label{Prop:Balance}
For any vertex $v$, one has $$\sum_{i=1}^lw(\xi_i)u_{v,i}=0.$$
\end{proposition}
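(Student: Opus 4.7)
The plan is to leverage the duality between the tropical curve $T = V_f^{\trop}$ and the dual subdivision $\tau$ of $\Delta(f)$ described in Subsection~\ref{Prel:Subsec:TropHS}. Since we are working in $\R^2$, a vertex $v$ of $T$ corresponds under duality to a $2$-dimensional polygon $\Delta_v \in \tau$ (property (1): $\dim v + \dim \Delta_v = 2$). The edges $\xi_1,\ldots,\xi_l$ of $T$ adjacent to $v$ are in bijection with the edges $e_1,\ldots,e_l$ of $\Delta_v$, where $e_i = \Delta_{\xi_i}$ and $\xi_i$ is orthogonal to $e_i$ (property (2)). Moreover, by the definition of the weight function, $w(\xi_i) = |e_i \cap \Z^2| - 1$ equals the lattice length of $e_i$.

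First, I would fix a counterclockwise orientation of $\partial \Delta_v$ and let $\vec{e}_i \in \Z^2$ denote the corresponding edge vector. Writing $p_i$ for the primitive integer vector in the direction of $\vec{e}_i$, the equality between weight and lattice length yields $\vec{e}_i = w(\xi_i)\, p_i$. Because $\partial \Delta_v$ is a closed polygonal curve,
\[
\sum_{i=1}^l w(\xi_i)\, p_i \;=\; \sum_{i=1}^l \vec{e}_i \;=\; 0.
\]

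Next, I would argue that there is a single linear isometry $R$ (the $90^\circ$ rotation, with direction determined by the chosen orientation) such that $u_{v,i} = R(p_i)$ for every $i$. Orthogonality from property (2) already gives $u_{v,i} = \pm R(p_i)$, so the content is that the sign is globally constant. This follows from the fact that the cyclic order of the edges $\xi_1,\ldots,\xi_l$ around $v$ in $T$ is compatible with the cyclic order of the dual edges $e_1,\ldots,e_l$ around $\Delta_v$: in each $2$-cell of the subdivision $\Xi$ incident to $v$, the tropical polynomial $f_{\trop}$ coincides with the affine function $\langle x,w\rangle + \val(c_w)$ for a single vertex $w$ of $\Delta_v$, and crossing $\xi_i$ at $v$ replaces $w$ by the other endpoint of $e_i$. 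Hence the outward-pointing normals of $\partial \Delta_v$ (oriented counterclockwise) match, under the same rotation $R$, the directions $u_{v,i}$ pointing away from $v$. Applying linearity of $R$,
\[
\sum_{i=1}^l w(\xi_i)\, u_{v,i} \;=\; R\!\left( \sum_{i=1}^l w(\xi_i)\, p_i \right) \;=\; R(0) \;=\; 0.
\]

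The main obstacle is the orientation-consistency step, i.e.\ proving that the sign choice in $u_{v,i} = \pm R(p_i)$ is the same for all $i$. I would make this rigorous by a direct local computation using Kapranov's description (Theorem~\ref{Th:Kap}) of $T$ as the corner locus of the Legendre transform $\mathcal{L}(\nu) = f_{\trop}$: the gradient of $f_{\trop}$ on the two $2$-cells of $\Xi$ adjacent to $\xi_i$ differs precisely by $\vec{e}_i$, and this identifies the tangent direction of $\xi_i$ pointing away from $v$ with a fixed $90^\circ$ rotation of the outward normal of $e_i$ in $\partial \Delta_v$. Once this local sign analysis is settled, the global statement follows immediately from the closed-polygon identity above.
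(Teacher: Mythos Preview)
The paper does not give a proof of this proposition: it is stated as a known property of tropical curves, with a reference to \cite{Mik06} for the general case. Your argument is the standard duality proof in the planar case and is correct; the closed-polygon identity for the boundary of the dual polygon $\Delta_v$, together with the orthogonality in property~(2) of the duality, indeed yields the balancing condition after a uniform $90^\circ$ rotation. Your orientation-consistency step via the gradient jump of $f_{\trop}$ across $\xi_i$ is exactly the right way to pin down the sign, and once that is in place the conclusion is immediate.
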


\subsection{Intersection of tropical hypersurfaces}\label{Prel:Subsec:InterComp} Consider polynomials $f_1,\ldots,f_r~\in\mathbb{K}[z_1^{\pm 1},\ldots,z_n^{\pm 1}]$. For $i=1,\ldots,r$, let $\Delta_i\subset\R^n$ (resp. $T_i\subset\R^n$) denote the Newton polytope (resp. tropical curve) associated to $f_i$. Recall that each tropical curve $T_i$ defines a piecewise linear polyhedral subdivision $\Xi_i$ of $\R^n$ which is dual to a convex polyhedral subdivision $\tau_i$ of $\Delta_i$. The union of these tropical hypersurfaces defines a piecewise-linear polyhedral subdivision $\Xi$ of $\R^n$. Any non-empty cell of $\Xi$ can be written as $$\xi=\xi_1\cap\cdots\cap\xi_r$$ with $\xi_i\in\Xi_i$ for $i=1,\ldots,r$. We require that $\xi$ does not lie in the boundary of any $\xi_i$, thus any cell $\xi\in\Xi$ can be uniquely written in this way. Denote by $\tau$ the mixed subdivision of the Minkowski sum $\Delta=\Delta_1 +\cdots + \Delta_r$ induced by the tropical polynomials $f_1,\ldots,f_r$. Recall that any polytope $\sigma\in\tau$ comes with a privileged representation $\sigma=\sigma_1+\cdots +\sigma_r$ with $\sigma_i\in\tau_i$ for $i=1,\ldots,r$. The above duality-correspondence applied to the (tropical) product of the tropical polynomials gives rise to the following well-known fact (see~\cite{BB13} for instance).

\begin{proposition}\label{Prop:MixedSubd}
There is a one-to-one duality correspondence between $\Xi$ and $\tau$, which reverses the inclusion relations, and such that if $\sigma\in\tau$ corresponds to $\xi\in\Xi$, then\\

\begin{enumerate}

 \item if $\xi=\xi_1\cap\cdots\cap~\xi_r$ with $\xi_i\in\Xi_i$ for $i=1,\ldots,r$, then $\sigma$ has representation $\sigma = \sigma_1 +\cdots +\sigma_r$ where each $\sigma_i$ is the polytope dual to $\xi_i$.\\
 
 \item $\dim\xi + \dim\sigma = n$,\\
 
 \item the cell $\xi$ and the polytope $\sigma$ span orthonogonal real affine spaces,\\
 
 \item the cell $\xi$ is unbounded if and only if $\sigma$ lies on a proper face of $\Delta$.\\

\end{enumerate}
\end{proposition}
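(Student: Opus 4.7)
The plan is to deduce Proposition~\ref{Prop:MixedSubd} from the single-hypersurface duality of Subsection~\ref{Prel:Subsec:TropHS} applied to the tropical product $F_{\trop} := f_{1,\trop} + \cdots + f_{r,\trop}$ (pointwise sum of functions, equivalently the tropical product of the $f_{i,\trop}$). Two preliminary observations are needed: the Newton polytope of $F_{\trop}$ is the Minkowski sum $\Delta = \Delta_1 + \cdots + \Delta_r$, and the corner locus of $F_{\trop}$ equals $T_1 \cup \cdots \cup T_r$, since a point $x \in \R^n$ is a smooth point of the sum if and only if it is a smooth point of each summand. It follows that the polyhedral subdivision of $\R^n$ induced by $F_{\trop}$ is exactly $\Xi$.

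Applying the single-hypersurface duality (Theorem~\ref{Th:Kap} together with the description of the dual subdivision $\tau$ in Subsection~\ref{Prel:Subsec:TropHS}) to $F_{\trop}$ yields a regular polyhedral subdivision $\tau_F$ of $\Delta$ dual to $\Xi$ for which items (2), (3), (4) of the proposition hold verbatim, with $\Delta$ in place of $\Delta(f)$. The bulk of the work is therefore to identify $\tau_F$ with the mixed subdivision $\tau$ and to verify item (1). For any $\xi \in \Xi$, write $\xi = \xi_1 \cap \cdots \cap \xi_r$ with $\xi_i \in \Xi_i$ and $\xi$ not on the boundary of any $\xi_i$. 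Pick $x$ in the relative interior of $\xi$; the dual polytope $\sigma$ of $\xi$ is, by definition, the convex hull of those exponents $w \in \Delta \cap \mathbb{Z}^n$ that realize the maximum in $F_{\trop}(x)$. Since $F_{\trop}(x) = \sum_i f_{i,\trop}(x)$ is a sum of maxima of affine functions, such a $w$ is attained precisely as $w = w_1 + \cdots + w_r$ where each $w_i$ attains the maximum in $f_{i,\trop}(x)$. Hence $\sigma = \sigma_1 + \cdots + \sigma_r$ with $\sigma_i$ dual to $\xi_i$, which is the Minkowski-sum decomposition of cells in the mixed subdivision $\tau$, proving both (1) and $\tau_F = \tau$.

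The main obstacle, as I see it, is verifying that the decomposition $\sigma = \sigma_1 + \cdots + \sigma_r$ produced in this way is the \emph{privileged} representation in $\tau$, i.e. that it is uniquely determined under the hypothesis that $\xi$ is not contained in the boundary of any $\xi_i$. The non-boundary hypothesis forces every $x$ in the relative interior of $\xi$ to lie in the relative interior of each $\xi_i$, so the set of exponents $w_i$ maximizing $f_{i,\trop}$ at $x$ is independent of the choice of $x$ and defines $\sigma_i$ unambiguously; conversely, if $\xi$ did lie on the boundary of some $\xi_i$ the corresponding $\sigma_i$ would jump in dimension and the decomposition would cease to be unique. Once this rigidity is established, items (2)--(4) are inherited directly from the single-hypersurface duality applied to $F_{\trop}$, completing the proof.
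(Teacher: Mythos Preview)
Your proposal is correct and follows precisely the approach the paper itself indicates: the paper does not give a detailed proof of Proposition~\ref{Prop:MixedSubd} but states that it ``gives rise'' from ``the above duality-correspondence applied to the (tropical) product of the tropical polynomials,'' citing~\cite{BB13}. Your argument is a faithful elaboration of that one-line justification, taking $F_{\trop}=\sum_i f_{i,\trop}$ and reading off items (2)--(4) from the single-hypersurface case while unpacking item~(1) via the maximizer description of dual cells.
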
 


\begin{definition} A cell $\xi$ is \textbf{transversal} if it satisfies $\dim(\Delta_\xi)=\dim(\Delta_{\xi_1})+\cdots+\dim(\Delta_{\xi_r})$, and it is \textbf{non transversal} if the previous equality does not hold. 
\end{definition}

\section{First construction: Transversal case}\label{Sec:Transversal}  Since this paper concerns algebraic sets of dimension zero contained in $(\R_{>0})^n$, the exposition in this section will only restrict to that orthant of $\R^n$. 
 
\subsection{Generalized Viro theorem and tropical reformulation}\label{Prel:Sec:ViroGen}

Following the description of B. Sturmfels~\cite{S94}, we recall now Viro's Theorem for hypersurfaces. Let $\mathcal{W}\subset\mathbb{Z}^n$ be a finite set of lattice points, and denote by $\Delta$ the convex hull of $\mathcal{W}$. Assume that $\dim\Delta = n$ and let $\varphi:~\mathcal{W}\rightarrow\Z$ be any function inducing a regular triangulation $\tau_{\varphi}$ of the integer convex polytope $\Delta$ (see Definition~\ref{Prel:Def:Regular}). Fix non-zero real numbers $c_w,~w\in\mathcal{W}$. For each positive real number $t$, we consider a Laurent polynomial 

\begin{equation}\label{Prel:eq:ViroPoly}
f_t(z_1,\ldots,z_n)=\sum_{w\in\mathcal{W}}c_wt^{\varphi(w)}z^w.
\end{equation} 

Let $\Baar(\tau_{\varphi})$ denote the first barycentric subdivision of the regular triangulation $\tau_{\varphi}$. Each maximal cell $\mu$ of $\Baar(\tau_{\varphi})$ is incident to a unique point $w\in\mathcal{W}$. We define the sign of a maximal cell $\mu$ to be the sign of the associated real number $c_w$. The sign of any lower dimensional cell $\lambda\in\Baar(\tau_{\varphi})$ is defined as follows:\\

\multirow{5}{*}{\ }{$\displaystyle \sign(\lambda):=\begin{cases} +\quad \text{if}~~\sign(\mu)=+\quad\text{for all maximal cells}\quad\mu\quad\text{containing}~\lambda,\\ -\quad \text{if}~~\sign(\mu)=-\quad\text{for all maximal cells}\quad\mu\quad\text{containing}~\lambda,\\ 0\quad\text{otherwise}.\end{cases}$}\\

 Let $\mathcal{Z}_+(\tau_\varphi,f)$ denote the subcomplex of $\Baar(\tau_\varphi)$ consisting of all cells $\lambda$ with $\sign(\lambda)=0$, and let $V_+(f_t)$ denote the zero set of $f_t$ in the positive orthant of $\R^n$. 
 Denote by $\Int(\Delta)$ the relative interior of $\Delta$.

\begin{theorem}[Viro]\label{Prel:Th:Vir}
For sufficiently small $t>0$, there exists a homeomorphism $(\R_{>0})^n\rightarrow\Int(\Delta)$ sending the real algebraic set $V_+(f_t)\subset(\R_{>0})^n$ to the simplicial complex $\mathcal{Z}_+(\tau_\varphi,f)\subset\Int(\Delta)$.
\end{theorem}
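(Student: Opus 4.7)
My plan is to reduce Viro's theorem to a local analysis of $f_t$ in logarithmic coordinates, governed by the regular triangulation $\tau_\varphi$. I would first introduce the change of variables $y=\log_{1/t}(z)$, i.e.\ $z_i=t^{-y_i}$, so that $f_t(z)=\sum_{w\in\mathcal{W}}c_w\,t^{\varphi(w)-\langle y,w\rangle}$. For $t>0$ small, the term whose exponent $\varphi(w)-\langle y,w\rangle$ is minimal dominates the sum. The minimum is attained at a unique vertex $w^*(y)\in\mathcal{W}$ precisely when $y$ lies in the interior of the cell of the Legendre subdivision of $\R^n$ induced by $\varphi$; by the duality reviewed in Subsection~\ref{Prel:Subsec:TropHS}, this subdivision of $\R^n$ is combinatorially dual to $\tau_\varphi$. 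Inside each such open chamber, $\sign(f_t)=\sign(c_{w^*})$, so $V_+(f_t)$ is empty there.

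Next I would analyze $V_+(f_t)$ near each lower-dimensional face of the Legendre subdivision. A face of codimension $k$ is dual to a $k$-dimensional simplex $\sigma$ of $\tau_\varphi$ with vertices $w_0,\ldots,w_k$, on which $k+1$ monomials become simultaneously dominant. Factoring out $z^{w_0}$ and the common smallest power of $t$, the equation $f_t=0$ restricted to a suitable tubular neighborhood of this face reduces, in appropriate positive coordinates, to the limiting equation $\sum_{i=0}^{k}c_{w_i}u^{w_i-w_0}=0$ on a $k$-dimensional positive torus. Since $w_0,\ldots,w_k$ span a $k$-dimensional affine subspace, a direct monomial parametrization shows that the positive solution set of this limit equation is a smooth $(k-1)$-dimensional disk when the signs $c_{w_0},\ldots,c_{w_k}$ are not all equal, and is empty otherwise. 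This is exactly the rule selecting which cells of $\Baar(\tau_\varphi)$ belong to $\mathcal{Z}_+(\tau_\varphi,f)$.

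To assemble these local pieces into a global homeomorphism, I would use a moment map $\mu\colon(\R_{>0})^n\to\Int(\Delta)$, which is a classical diffeomorphism (for instance $z\mapsto(\sum_{w\in\mathcal{W}}z^w)^{-1}\sum_{w\in\mathcal{W}}w\,z^w$). After the logarithmic rescaling by $t$, the chamber decomposition of $\R^n$ in $y$-coordinates corresponds, under $\mu$, to the star decomposition of $\Int(\Delta)$ afforded by $\Baar(\tau_\varphi)$: the chamber associated to a vertex $w\in\mathcal{W}$ is identified with the union of barycentric cells of $\Baar(\tau_\varphi)$ incident to $w$. The local homeomorphisms of the previous paragraph, transported through $\mu$, then pair each stratum of $V_+(f_t)$ with the matching cell of $\mathcal{Z}_+(\tau_\varphi,f)$, yielding the desired homeomorphism for $t$ small enough.

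The main obstacle is uniformity and gluing: the local identifications near different strata of the Legendre subdivision are valid only for $t$ below a stratum-dependent threshold, so I must verify that a common $t_0>0$ works simultaneously for all cells of the finite complex, and that the local homeomorphisms can be patched continuously along shared boundaries. I would handle this by a compact-exhaustion argument in $y$-coordinates combined with uniform implicit-function-theorem estimates along each stratum, using the regularity of $\tau_\varphi$ (equivalently the smoothness of the dual piecewise-linear envelope) to bound the width of the transition zones by $O(1/\log(1/t))$, so that for $t$ sufficiently small the stratified picture stabilizes to the combinatorial model $\mathcal{Z}_+(\tau_\varphi,f)\subset\Int(\Delta)$.
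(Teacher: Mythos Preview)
The paper does not prove this statement: Theorem~\ref{Prel:Th:Vir} is quoted as a background result attributed to Viro (following Sturmfels' exposition), with no proof given in the text. There is therefore nothing in the paper to compare your argument against.

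As for the sketch itself, the overall strategy---pass to logarithmic coordinates $y=\log_{1/t}z$, identify the dominant monomial on each chamber of the dual subdivision, analyze the sign change across codimension-one walls, and transport the picture to $\Int(\Delta)$ via a moment-type map---is the standard one and is essentially correct in spirit. Two points would need real work to make this a proof. First, the moment map you write down, $z\mapsto\bigl(\sum_{w}z^w\bigr)^{-1}\sum_{w}w\,z^w$, is a diffeomorphism onto $\Int(\Delta)$ only under hypotheses on $\mathcal{W}$ (e.g.\ that $\mathcal{W}$ contains the vertices of $\Delta$); in general one uses the algebraic moment map of the toric variety or an explicit logarithmic rescaling, and you should say which. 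Second, your gluing paragraph is the crux and is only asserted: the claim that the transition zones have width $O(1/\log(1/t))$ and that implicit-function estimates are uniform across strata is exactly where the proof lives, and it requires either the amoeba/Ronkin-function machinery (as in the Mikhalkin--Rullg{\aa}rd approach the paper cites in Theorem~\ref{Prel:Th:Trop-Reform}) or a careful inductive patchworking argument as in Viro's original papers. Without one of these, the sketch remains a plan rather than a proof.
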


Naturally, a signed version of Theorem~\ref{Prel:Th:Vir} holds in each of the $2^n$ orthants $$(\R_{>0})^\epsilon:=\{(x_1,\ldots,x_n)\in(\R^*)^n~|~\sign(x_i)=\epsilon_i~\text{for}~i=1,\ldots,n\},$$ where $\epsilon\in\{+,-\}^n$. In fact, O. Viro proves a more general version of Theorem~\ref{Prel:Th:Vir}, in which he defines a set that is homeomorphic to the zero set $V(f_t)\subset\R^n$ (not only the positive zero set $V_+(f_t)$) by means of gluing the zero sets of $f_t$ contained in all other orthants of $\R^n$.

We now reformulate Theorem~\ref{Prel:Th:Vir} using tropical geometry. We consider $g:=f_t$ as a polynomial defined over the field of real generalized locally convergent Puiseux series, where each coefficient $c_wt^{\varphi(w)}\in\mathbb{RK}^*$ of $g$ has only one term. Therefore $\coef(c_wt^{\varphi(w)})=c_w$, $\val(c_wt^{\varphi(w)})=-\varphi(w)$, and we associate to $g$ a tropical hypersurface $V_g^{\trop}$ as defined in Subsection~\ref{Prel:Subs:TropPH}. Recall that $V_g^{\trop}$ induces a subdivision $\Xi_g$ of $\mathbb{R}^n$ that is dual to $\tau_\varphi$. The tropical hypersurface $V_g^{\trop}$ is homeomorphic to the barycentric subdivision $\Baar(\tau_\varphi)$. Indeed, $\tau_\varphi$ is a triangulation, and thus $\Baar(\tau_\varphi)$ becomes dual to $\tau_\varphi$ in the sense of the duality described in Subsection~\ref{Prel:Subsec:TropHS}. 

We define for each $n$-cell $\xi\in\Xi_g$, dual to a $0$-face (vertex) $w$ of the triangulation $\tau_{\varphi}$, a sign $\epsilon(w)\in\{+,-\}$, to be equal to the sign of $c_w$. 

\begin{definition}\label{Prel:Def:PosPar}
The \textbf{positive part}, denoted by $V_{g,+}^{\trop}$, is the subcomplex of $V_g^{\trop}$ consisting of all $(n-1)$-cells of $V_g^{\trop}$ that are adjacent to two $n$-cells of $V_g^{\trop}$ having different signs (see the left part of Figure~\ref{Fig:PosPart} for an example). A \textbf{positive facet} $\xi_+$ is an $(n-1)$-dimensional cell of $V_{g,+}^{\trop}$.
\end{definition} The following is a Corollary of Mikhalkin~\cite{Mik04} and Rullgard~\cite{Rul01} results, where they completely describe the topology of $V(f_t)$ using \textit{amoebas}.

\begin{figure}[H]
\centering
\includegraphics[scale=0.6]{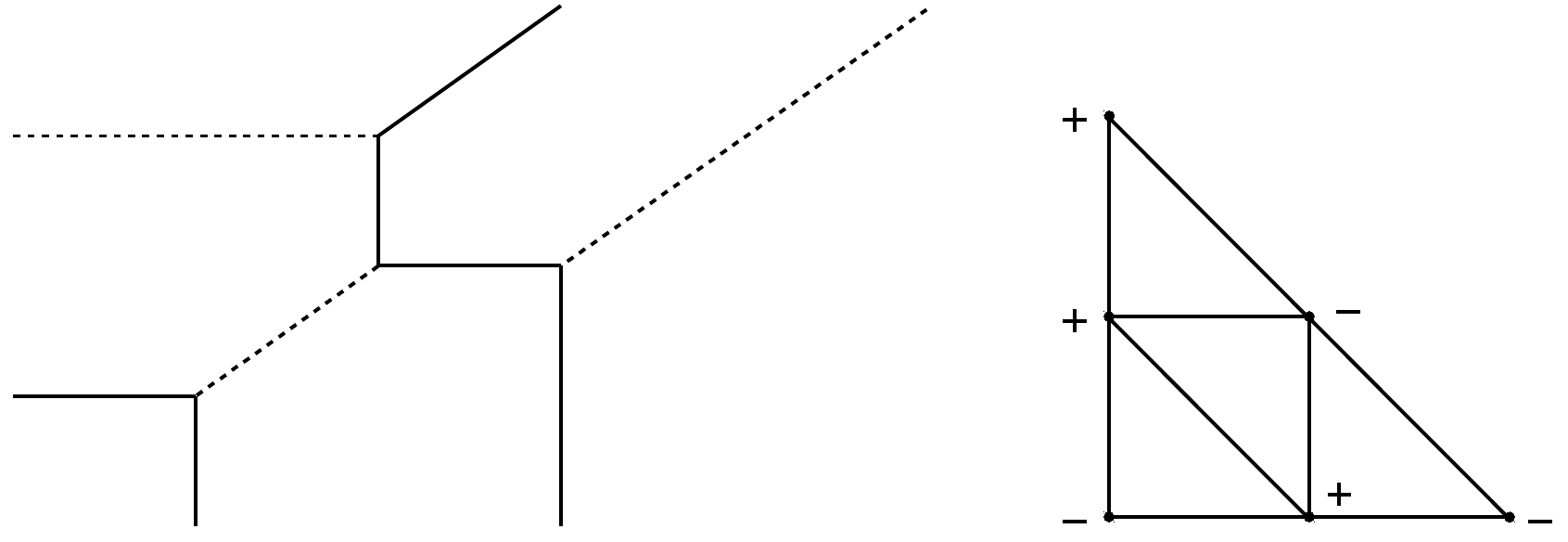}
\caption{The positive part of the tropical hypersurface associated to $-t +z_1 -tz_1^2-z_1z_2+z_2 +tz_2^2$ is represented as the union of the solid segments and solid half-rays.}
\label{Fig:PosPart}
\end{figure}

\begin{theorem}[Mikhalkin, Rullgard]\label{Prel:Th:Trop-Reform}
For sufficiently small $t>0$, there exists a homeomorphism $(\R_{>0})^n\rightarrow\R^n$ sending the zero set $V_+(f_t)\subset(\R_{>0})^n$ to $V_{g,+}^{\trop}\subset\mathbb{R}^n$.
\end{theorem}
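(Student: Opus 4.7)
The plan is to combine Viro's Theorem~\ref{Prel:Th:Vir} with the cellular duality between the barycentric subdivision $\Baar(\tau_\varphi)$ of $\Delta$ and the polyhedral subdivision $\Xi_g$ of $\R^n$ induced by $V_g^{\trop}$. Both complexes are naturally dual to the triangulation $\tau_\varphi$, so morally the positive part $V_{g,+}^{\trop}$ inside $\R^n$ and the complex $\mathcal{Z}_+(\tau_\varphi,f)$ inside $\Int(\Delta)$ should be homeomorphic, while Viro supplies the homeomorphism on the other side.

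First, I would apply Theorem~\ref{Prel:Th:Vir} to obtain, for sufficiently small $t>0$, a homeomorphism $\Phi:(\R_{>0})^n\to\Int(\Delta)$ with $\Phi(V_+(f_t))=\mathcal{Z}_+(\tau_\varphi,f)$. It then suffices to construct a homeomorphism $\Psi:\Int(\Delta)\to\R^n$ sending $\mathcal{Z}_+(\tau_\varphi,f)\cap\Int(\Delta)$ onto $V_{g,+}^{\trop}$; the composite $\Psi\circ\Phi$ is then the map required by the theorem.

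To construct $\Psi$, I would exploit the following cell matching. By Subsection~\ref{Prel:Subsec:TropHS}, cells of $\Xi_g$ and cells of $\tau_\varphi$ are in inclusion-reversing bijection with complementary dimensions. On the other hand, for each vertex $w\in\mathcal{W}$ of $\tau_\varphi$, the union $\mathrm{St}(w)$ of all maximal simplices of $\Baar(\tau_\varphi)$ incident to $w$ is topologically a closed $n$-cell, and two such stars $\mathrm{St}(w),\mathrm{St}(w')$ share an $(n-1)$-wall precisely when $w,w'$ are joined by an edge of $\tau_\varphi$, which in turn is dual to an $(n-1)$-cell of $V_g^{\trop}$. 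Iterating this identification over faces of every dimension of $\tau_\varphi$ yields a combinatorial isomorphism of cell complexes between $\Int(\Delta)$ and $\R^n$, which I would realize by an explicit PL homeomorphism $\Psi$. Since both sides assign the sign $\sign(c_w)$ to the $n$-cell indexed by $w$, an $(n-1)$-wall separating stars of opposite sign maps to an $(n-1)$-facet of $V_g^{\trop}$ lying in $V_{g,+}^{\trop}$, and the lower-dimensional cells of $\mathcal{Z}_+(\tau_\varphi,f)$ fill in as the boundaries of these facets.

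The main obstacle will be making the boundary behavior of $\Psi$ precise: cells of $\Baar(\tau_\varphi)$ incident to $\partial\Delta$ must be stretched to the unbounded cells of $\Xi_g$, which is exactly what property (3) of Subsection~\ref{Prel:Subsec:TropHS} predicts combinatorially. Upgrading the resulting combinatorial cellular isomorphism to a genuine homeomorphism then requires a standard but careful PL construction, choosing compatible affine maps on each cell and verifying that they glue along the shared faces, with particular care near $\partial\Delta$ where one has to leave every compact set.
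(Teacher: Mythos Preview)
The paper does not supply its own proof of this theorem: it is stated as a corollary of results of Mikhalkin and Rullgard, who describe the topology of $V(f_t)$ via amoebas and their spines, and no further argument is given. Your approach is therefore genuinely different from the paper's, which simply cites the amoeba machinery.

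Your route is sound in outline and, in a sense, more self-contained within the paper's setup: the paper itself remarks just before Definition~\ref{Prel:Def:PosPar} that $V_g^{\trop}$ is homeomorphic to $\Baar(\tau_\varphi)$ precisely because both are dual to the triangulation $\tau_\varphi$, so the combinatorial backbone of your map $\Psi$ is already present. What the amoeba approach buys is a single analytic construction (the rescaled logarithm map $\mathrm{Log}_t$ and its limit) that produces the homeomorphism and the tropical degeneration simultaneously, without invoking Viro's Theorem~\ref{Prel:Th:Vir} as a separate black box. Your approach trades that analysis for a purely combinatorial PL gluing; this is legitimate, but the part you flag as ``the main obstacle'' is real: the statement $\mathcal{Z}_+(\tau_\varphi,f)\subset\Int(\Delta)$ in Theorem~\ref{Prel:Th:Vir} is somewhat informal, and matching the cells of $\Baar(\tau_\varphi)$ that meet $\partial\Delta$ with the unbounded cells of $\Xi_g$ really requires working with pairs (or passing to a radial compactification of $\R^n$) rather than a naive homeomorphism $\Int(\Delta)\to\R^n$. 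Once that is made precise your argument goes through.
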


B. Sturmfels generalized Viro's method for complete intersections in~\cite{S94}. We give now a tropical reformulation of one of the main Theorems of~\cite{S94}. 

Consider a system 

\begin{equation}\label{Prel:eq:sys:Sturm}
f_{1,t}(z_1,\ldots,z_n)=\cdots=f_{r,t}(z_1,\ldots,z_n)=0,
\end{equation} of $r$ equations, where all $f_{t,i}$ are polynomials of the form~\eqref{Prel:eq:ViroPoly}. For $i=1,\ldots,r$, we define as before $g_i:=f_{i,t}$ as a polynomial in $\mathbb{RK}[z_1^{\pm 1},\ldots,z_n^{\pm 1}]$. Let $V_+(f_{1,t},\ldots,f_{r,t})\subset(\R_{>0})^n$ denote the locus of positive solutions of~\eqref{Prel:eq:sys:Sturm}. 

\begin{theorem}[Sturmfels]\label{Prel:Th:Stur}
Assume that the tropical hypersurfaces $V_{g_1}^{\trop},\ldots,V_{g_r}^{\trop}$ intersect transversally. Then for sufficiently small $t>0$, there exists a homeomorphism $(\R_{>0})^n\rightarrow\R^n$ sending the real algebraic set $\mathcal{Z}_+(f_{1,t},\ldots,f_{r,t})\subset(\R_{>0})^n$ to the intersection $V_{g_1,+}^{\trop}\cap\cdots\cap V_{g_r,+}^{\trop}\subset\R^n$.
\end{theorem}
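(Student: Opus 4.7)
The plan is to reduce the multi-hypersurface statement to a local analysis near each transverse positive intersection cell of the tropical hypersurfaces, and then glue the local data into a global homeomorphism using the single-hypersurface result (Theorem~\ref{Prel:Th:Trop-Reform}) as a scaffold. Concretely, for sufficiently small $t$ I will exhibit a bijection between points of $V_{g_1,+}^{\trop}\cap\cdots\cap V_{g_r,+}^{\trop}$ and positive solutions of~\eqref{Prel:eq:sys:Sturm}, sending each positive solution $z(t)$ to its valuation $\Val(z(t))$ (up to a perturbation that vanishes as $t\to 0$).

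For the local analysis, fix a point $p$ in the relative interior of a transverse cell $\xi_1\cap\cdots\cap\xi_r$, with each $\xi_i$ a positive facet of $V_{g_i}^{\trop}$, and perform the monomial substitution $z_j=y_j\,t^{-p_j}$ in each $f_{i,t}$. After clearing the dominant power of $t$, Proposition~\ref{Prop:MixedSubd} guarantees that each dual polytope $\Delta_{\xi_i}$ is a $1$-dimensional edge, so the leading term $\tilde f_i(y)$ as $t\to 0$ is a binomial $c_i^{+}y^{w_i^{+}}+c_i^{-}y^{w_i^{-}}$ whose coefficients have opposite signs by Definition~\ref{Prel:Def:PosPar}. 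The reduced system $\tilde f_1=\cdots=\tilde f_r=0$ thus becomes, after coordinate-wise logarithm, an affine linear system whose matrix of exponent differences has rank $r$ by transversality, cutting out a single $(n-r)$-dimensional positive stratum. The implicit function theorem applied to the rescaled analytic family $F_i(y,t)=t^{-\mu_i}f_{i,t}(y\,t^{-p})$, for appropriate $\mu_i\in\R$, at this stratum then produces for small $t$ a unique sheet of positive solutions $z^{\star}(t)=y^{\star}(t)\cdot t^{-p}$ of~\eqref{Prel:eq:sys:Sturm} with $\Val(z^{\star})=p$.

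Conversely, Theorem~\ref{Th:Kap} applied to each $f_{i,t}$ forces the valuation of any positive solution $z(t)$ to lie in $\bigcap_i V_{g_i}^{\trop}$, and the same binomial analysis rules out non-positive cells: a binomial with same-sign coefficients has no positive root. This yields the asserted bijection. To promote it to a homeomorphism $(\R_{>0})^n\to\R^n$, I would start from the Mikhalkin--Rullgard homeomorphism of Theorem~\ref{Prel:Th:Trop-Reform} applied to a sufficiently generic linear combination of the $f_{i,t}$, and perform a local isotopy inside pairwise disjoint neighborhoods of the finitely many transverse intersection cells so that each positive sheet $z^{\star}(t)$ is carried onto its corresponding tropical cell. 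The main obstacle lies in the implicit function step: one must control the non-leading terms of $f_{i,t}(y\,t^{-p})$ uniformly in $y$ over a compact neighborhood of the positive stratum of the reduced system, independently of $t$. This requires exploiting that $p$ lies in the open relative interior of $\xi_1\cap\cdots\cap\xi_r$ and choosing neighborhood radii that shrink appropriately as $t\to 0$, so that the Jacobian of $(F_1,\ldots,F_r)$ at $y^{\star}(t)$ remains invertible and the resulting lift is a genuine positive solution rather than merely an asymptotic one.
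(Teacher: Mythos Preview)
The paper does not prove Theorem~\ref{Prel:Th:Stur}. It is stated as a result of Sturmfels~\cite{S94}, presented here in tropical language, and is used as a black box throughout the rest of the paper. There is therefore no ``paper's own proof'' to compare your proposal against.

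As for your sketch itself: the local picture you describe is essentially the standard one. At a transverse positive intersection cell, each reduced equation is a binomial with coefficients of opposite sign (this is exactly the content of Definition~\ref{Prel:Def:PosPar}), the binomial system has a unique positive solution because the exponent matrix has full rank by transversality, and this solution deforms for small $t$ by an implicit-function argument. The converse direction via Kapranov's theorem and the sign analysis is also correct in spirit. Where your write-up is thinnest is precisely where you flag it: promoting the bijection of $0$-strata to an ambient homeomorphism $(\R_{>0})^n\to\R^n$ that works simultaneously for all $r$ hypersurfaces is not a consequence of applying Theorem~\ref{Prel:Th:Trop-Reform} to a single generic linear combination, since that only controls one hypersurface at a time. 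Sturmfels' original argument in~\cite{S94} proceeds instead via the regular mixed subdivision of the Minkowski sum and a simultaneous toric degeneration, which handles all $r$ equations at once; your gluing-by-isotopy suggestion would need substantially more care to make rigorous.
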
 Similarly to O. Viro's work, B. Sturmfels generalizes Theorem~\ref{Prel:Th:Stur} for the zero set

\noindent $V(f_{1,t},\ldots,f_{r,t})\subset\R^n$ (see~\cite[Theorem 5]{S94}).

\subsection{Tropical transversal intersection points for bivariate polynomials} For the rest of this section, we assume that the system appearing in~\eqref{Prel:eq:sys:Sturm} has two equations in two variables (i.e. $n=r=2$), and that the tropical curves $V_{g_1}^{\trop}$and $V_{g_2}^{\trop}$ intersect transversally. Then the intersection set $V_{g_1,+}^{\trop}\cap V_{g_2,+}^{\trop}$ is a (possibly empty) set of points in $\R^2$. Each point of $V_{g_1,+}^{\trop}\cap V_{g_2,+}^{\trop}$ is expressed in a unique way as a transversal intersection $\xi_{1,+}\cap\xi_{2,+}$, where for $i=1,2$, the cell $\xi_{i,+}\subset V_{g_i,+}^{\trop}$ is a positive cell. In this section, we use Theorem~\ref{Prel:Th:Stur} to prove Proposition~\ref{Prop:Intro}.

F. Bihan~\cite{B14} gave an upper bound on $|V_{g_1}^{\trop}\cap V_{g_2}^{\trop}|$ (and thus on $|V_{g_1,+}^{\trop}\cap V_{g_2,+}^{\trop}|$) for a bivariate system~\eqref{Prel:eq:sys:Sturm} in two equations. Namely, given two finite sets $\mathcal{W}_1$ and $\mathcal{W}_2$ in $\R^2$, and for any non-empty $I\subset \{1,2\}$, write $\mathcal{W}_I$ for the set of points $\sum_{i\in I}w_i$ over all $w_i\in\mathcal{W}_i$ with $i\in I$. The associated \textit{discrete mixed volume} of $\mathcal{W}_1$ and $\mathcal{W}_2$ is defined as
\begin{equation}\label{Prel:eq:DiscMix}
\displaystyle D(\mathcal{W}_1,\mathcal{W}_2)=\sum_{I\subset[r]}(-1)^{r-|I|}|\mathcal{W}_I|,
\end{equation} where the sum is taken over all subsets $I$ of $\{1,2\}$ including the empty set with the convention that $|\mathcal{W}_\emptyset|=1$. Denote by $\mathcal{W}_i$ the support of $g_1$ for $i=1,2$. Recall that the tropical curves associated to $g_1,g_2$ intersect transversally.

\begin{theorem}[Bihan]\label{Prel:Th:BihDisc}
 The number $|V_{g_1}^{\trop}\cap V_{g_2}^{\trop}|$ is less or equal to the discrete mixed volume $D(\mathcal{W}_1,\mathcal{W}_2)$.
\end{theorem}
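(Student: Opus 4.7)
The plan is to use Proposition~\ref{Prop:MixedSubd} to recast $|V_{g_1}^{\trop} \cap V_{g_2}^{\trop}|$ as the number of two-dimensional mixed cells in the subdivision $\tau$ of $\Delta_1 + \Delta_2$, and then to bound that number by a combinatorial count on $\tau$. Under the transversality assumption, every intersection point of the two tropical curves is a $0$-dimensional cell $\xi = \xi_1 \cap \xi_2$ of the common refinement $\Xi$, dual to a $2$-cell $\sigma = \sigma_1 + \sigma_2 \in \tau$ with each $\sigma_i$ a $1$-dimensional edge of $\tau_i$ and with $\sigma_1, \sigma_2$ spanning distinct directions. I will call such $\sigma$ a \emph{parallelogram cell}. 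Writing $F_P$ for their number, we have $|V_{g_1}^{\trop} \cap V_{g_2}^{\trop}| = F_P$, and the task reduces to proving $F_P \leq D(\mathcal{W}_1, \mathcal{W}_2)$.

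The key combinatorial input is that every vertex of $\tau$ is of the form $w_1 + w_2$ with $w_i \in \mathcal{W}_i$, so the vertex set of $\tau$ injects into $\mathcal{W}_1 + \mathcal{W}_2$. I would then combine Euler's formula on the planar subdivision $\tau$ with an edge-face incidence count. Writing $V, E$ for the vertex and edge totals and $F_P + F_{NP}$ for the total $2$-cell count, Euler gives $V - E + F_P + F_{NP} = 1$, while counting edges of each $2$-cell ($4$ for a parallelogram, at least $3$ for a pure $\tau_1$- or $\tau_2$-cell) gives $4F_P + 3F_{NP} \leq 2E - E_\partial$, where $E_\partial = V_\partial$ are the boundary counts. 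Eliminating $E$ yields an inequality of the form $F_P \leq V - 1 - V_\partial/2 - F_{NP}/2$. One then has to exploit (i) the upper bound $V \leq |\mathcal{W}_1 + \mathcal{W}_2|$, (ii) a lower bound on $V_\partial$ coming from the zig-zag decomposition of $\partial(\Delta_1+\Delta_2)$ into boundary edges of $\Delta_1$ and $\Delta_2$ with matching outer normals (each contributing its own lattice points of $\mathcal{W}_1$ or $\mathcal{W}_2$), and (iii) a lower bound on $F_{NP}$ coming from the fact that each vertex of $V_{g_i}^{\trop}$ produces a pure $2$-cell of $\tau$, with Proposition~\ref{Prop:Balance} guaranteeing enough such vertices.

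The main obstacle I anticipate is sharpness. A naive execution of the above Euler argument overshoots $D(\mathcal{W}_1, \mathcal{W}_2)$ on small examples, so one has to squeeze the $|\mathcal{W}_1| + |\mathcal{W}_2|$ correction precisely. My strategy would be to reduce first to a \emph{fine} mixed subdivision by a small perturbation of the lifting functions defining $\tau$: parallelogram cells are stable under such perturbation (so $F_P$ does not drop), while all pure $2$-cells become triangles and both $\tau_1, \tau_2$ become triangulations on the full vertex sets $\mathcal{W}_1, \mathcal{W}_2$. In the fine setting one has the classical identities $e_i = v_i + t_i - 1$ and $3t_i + v_i^\partial = 2e_i$ for each $\tau_i$, which, combined with the global Euler count on $\tau$ and the precise decomposition of $\partial(\Delta_1 + \Delta_2)$ into boundary parts inherited from $\tau_1$ and $\tau_2$, should yield exactly $F_P \leq |\mathcal{W}_1 + \mathcal{W}_2| - |\mathcal{W}_1| - |\mathcal{W}_2| + 1$. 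I expect this perturbation route to be the cleanest; the delicate bookkeeping of which points of $\mathcal{W}_1 + \mathcal{W}_2$ actually appear as vertices of $\tau$ on the boundary versus the interior is where most of the real work lies.
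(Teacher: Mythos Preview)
The paper does not prove Theorem~\ref{Prel:Th:BihDisc}. It is stated as a result of Bihan, attributed to~\cite{B14}, and used as a black box; no argument (not even a sketch) is given in the present paper. So there is no ``paper's own proof'' to compare your proposal against.

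That said, a brief comment on your outline. The overall strategy---identify transversal intersection points with parallelogram cells of the mixed subdivision via Proposition~\ref{Prop:MixedSubd}, perturb to a fine mixed subdivision, and then run an Euler-characteristic count---is indeed the standard route to results of this kind and is close in spirit to what Bihan does in~\cite{B14}. However, what you have written is a plan, not a proof: you yourself flag that the ``delicate bookkeeping'' of which points of $\mathcal{W}_1+\mathcal{W}_2$ appear as vertices of $\tau$, and how the boundary of $\Delta_1+\Delta_2$ decomposes, is ``where most of the real work lies,'' and none of that work is carried out. In particular, the inequality you derive, $F_P \le V - 1 - V_\partial/2 - F_{NP}/2$, is not yet matched to $|\mathcal{W}_1+\mathcal{W}_2| - |\mathcal{W}_1| - |\mathcal{W}_2| + 1$; you would still need to show precisely that $V_\partial/2 + F_{NP}/2 \ge |\mathcal{W}_1| + |\mathcal{W}_2| - 2$ (or the appropriate sharp inequality after perturbation), and this step is where naive counting typically fails. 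If you want a self-contained argument rather than a citation, you should either complete that bookkeeping explicitly or consult~\cite{B14} for the details.
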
 


When $|\mathcal{W}|=4$, then the bound of Theorem~\ref{Prel:Th:BihDisc} is $3$ and is sharp (see~\cite{B07}). However, we do not know if the discrete mixed volume bound is sharp for any polynomial system with $2$ equations in $2$ variables satisfying that the associated tropical curves intersect transversally.

\subsection{Restriction to the case $n=k=2$} Consider a system \begin{equation}\label{Statem:eq:NotNorm}
f_1 = f_2 = 0
\end{equation} of type $n=k=2$ (i.e.~\eqref{Statem:eq:NotNorm} has five distinct points in its total support), where $f_1,f_2\in\mathbb{RK}[z_1^{\pm 1},z_2^{\pm 1}]$. Assume that the tropical curves $T_1$ and $T_2$, associated to $f_1$ and $f_2$ respectively, intersect transversally. Let $\mathcal{W}_1,\mathcal{W}_2\subset\mathbb{Z}^2$ denote the supports of $f_1$ and $f_2$ respectively. 

\begin{lemma}\label{L:Discrete}
The discrete mixed volume $D(\mathcal{W}_1,\mathcal{W}_2)$ does not exceed six.
\end{lemma}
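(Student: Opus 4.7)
The plan is to start from the discrete mixed volume formula~\eqref{Prel:eq:DiscMix}, which for $r = 2$ unpacks as
$$D(\mathcal{W}_1, \mathcal{W}_2) = |\mathcal{W}_1 + \mathcal{W}_2| - |\mathcal{W}_1| - |\mathcal{W}_2| + 1,$$
with $\mathcal{W}_1 + \mathcal{W}_2$ the Minkowski sum. Since $|\mathcal{W}_1| + |\mathcal{W}_2| = 5 + |\mathcal{W}_1 \cap \mathcal{W}_2|$ is determined by the overlap, the problem reduces to bounding the cardinality of $\mathcal{W}_1 + \mathcal{W}_2$.

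I would decompose the total support as $\mathcal{W}_1 \cup \mathcal{W}_2 = A \sqcup B \sqcup C$, where $A = \mathcal{W}_1 \setminus \mathcal{W}_2$, $B = \mathcal{W}_2 \setminus \mathcal{W}_1$, $C = \mathcal{W}_1 \cap \mathcal{W}_2$, and set $\alpha = |A|$, $\beta = |B|$, $\gamma = |C|$ so that $\alpha + \beta + \gamma = 5$. The Minkowski sum splits as
$$\mathcal{W}_1 + \mathcal{W}_2 = (A + B) \cup (A + C) \cup (B + C) \cup (C + C),$$
yielding the estimate
$$|\mathcal{W}_1 + \mathcal{W}_2| \leq \alpha\beta + \alpha\gamma + \beta\gamma + \binom{\gamma+1}{2},$$
where the last term reflects the symmetry $c + c' = c' + c$ inside $C + C$. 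Substituting into the formula for $D$ gives a bound depending only on $(\alpha, \beta, \gamma)$, and a finite check over the six values $\gamma \in \{0, 1, \ldots, 5\}$, maximizing $\alpha\beta$ subject to $\alpha + \beta = 5 - \gamma$ with $\alpha, \beta$ nonnegative integers, confirms $D \leq 6$ in every case, with equality occurring at $(\alpha, \beta, \gamma) \in \{(1,1,3), (1,0,4), (0,1,4), (0,0,5)\}$.

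The one step that requires care is the treatment of $|C + C|$: the crude bound $|\mathcal{W}_1 + \mathcal{W}_2| \leq |\mathcal{W}_1| \cdot |\mathcal{W}_2|$ alone is too weak, since in the symmetric case $\mathcal{W}_1 = \mathcal{W}_2$ it produces only $D \leq 16$. The symmetric-square refinement $|C + C| \leq \binom{\gamma+1}{2}$, which accounts for the double-counting of each unordered pair $\{c, c'\} \subset C$ among the ordered pairs in $\mathcal{W}_1 \times \mathcal{W}_2$, is precisely what brings the global bound down to $6$.
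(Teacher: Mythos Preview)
Your proof is correct and follows essentially the same approach as the paper: both arguments bound $|\mathcal{W}_1+\mathcal{W}_2|$ by exploiting the symmetry $c+c'=c'+c$ within the common part $C=\mathcal{W}_1\cap\mathcal{W}_2$, and then run through the finitely many possibilities for $|\mathcal{W}_1\cap\mathcal{W}_2|$. The only difference is packaging: the paper carries out an explicit enumeration of $\mathcal{W}_1+\mathcal{W}_2$ in each case (writing out $\bigcup_i\{w_i+w_j:j\geq i\}$ and counting), whereas you encode the same count once and for all in the inequality $|\mathcal{W}_1+\mathcal{W}_2|\leq \alpha\beta+\alpha\gamma+\beta\gamma+\binom{\gamma+1}{2}$ and then tabulate; the underlying idea and the resulting bounds in each case coincide.
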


\begin{proof}Recall that $|\mathcal{W}_1\cup\mathcal{W}_2|=5$. We distinguish the five possible cases $|\mathcal{W}_1\cap\mathcal{W}_2|=i$ for $i=1,\ldots,5$, and prove the result for $i=3,4$ since the case $i=5$ is proven in~\cite{B14} and the other cases are similar. The discrete mixed volume of $\mathcal{W}_1$ and $\mathcal{W}_2$ is expressed as 

\begin{equation}\label{eq:DiscMixW1W2}
D(\mathcal{W}_1,\mathcal{W}_2)= |\mathcal{W}_1 + \mathcal{W}_2| - |\mathcal{W}_1| - |\mathcal{W}_2| + 1.
\end{equation}

Assume first that $|\mathcal{W}_1\cap\mathcal{W}_2|=4$. Then the cardinal of one of the two sets, say $\mathcal{W}_1$, is equal to four. Writing $\mathcal{W}_1=\{w_0,w_1,w_2,w_3\}$ and $\mathcal{W}_2=\{w_0,w_1,w_2,w_3,w_4\}$, we get $$\mathcal{W}_1+\mathcal{W}_2=\bigcup_{i=0}^3\{w_i+w_j~|~j=0,\ldots, 4,~j\geq i\},$$ and thus $|\mathcal{W}_1+\mathcal{W}_2|\leq 14$. Therefore, with $|\mathcal{W}_1|=4$ and $|\mathcal{W}_2|=5$, we deduce that $D(\mathcal{W}_1,\mathcal{W}_2)\leq 6$.

Assume now that $|\mathcal{W}_1\cap\mathcal{W}_2|=3$. We distinguish two cases\\

\begin{itemize}

  \item[\textbf{i)}] First case: $|\mathcal{W}_1|=3$ and $|\mathcal{W}_2|=5$ (the case where $|\mathcal{W}_1|=5$ and $|\mathcal{W}_2|=3$ is symmetric). Writing $\mathcal{W}_1=\{w_0,w_1,w_2\}$ and $\mathcal{W}_2=\{w_0,w_1,w_2,w_3,w_4\}$, we get $$\mathcal{W}_1+\mathcal{W}_2=\bigcup_{i=0}^2\{w_i+w_j~|~j=0,\ldots, 4,~j\geq i\},$$ and thus $|\mathcal{W}_1+\mathcal{W}_2|\leq 12$. Therefore, with $|\mathcal{W}_1|=3$ and $|\mathcal{W}_2|=5$, we deduce that $D(\mathcal{W}_1,\mathcal{W}_2)\leq 5$.\\

  \item[\textbf{ii)}] Second case: $|\mathcal{W}_1|=|\mathcal{W}_2|=4$. Writing $\mathcal{W}_1=\{w_0,w_1,w_2,w_3\}$ and $\mathcal{W}_2=\{w_1,w_2,w_3,w_4\}$, we get $$\mathcal{W}_1+\mathcal{W}_2=\bigcup_{i=0}^3\{w_i+w_j~|~j=1,\ldots, 4,~j\geq i\},$$ and thus $|\mathcal{W}_1+\mathcal{W}_2|\leq 13$. Therefore, with $|\mathcal{W}_1|=4$ and $|\mathcal{W}_2|=4$, we deduce that $D(\mathcal{W}_1,\mathcal{W}_2)\leq 6$.\\
\end{itemize}
\end{proof}

We finish this section by proving Proposition~\ref{Prop:Intro}.

\subsubsection*{Proof of Proposition~\ref{Prop:Intro}} Figure~\ref{Fig:SixTrans} shows that the tropical curves $T_1$ and $T_2$, associated to the equations of the system
\begin{equation}\label{n=k=2(3)}
 \begin{array}{rccl}
    \displaystyle \ & -1 + t^{12} + x^6 + x^3y^{6} -tx^{10}y^{12} & = & 0,
    \\[10pt]
    \displaystyle \ & -t^{12}  + t^5x^3y^6 -t^{1.5}x^7y^{11} + tx^{10}y^{12} & = & 0,\\
  \end{array} 
\end{equation} intersect at six transversal intersection points.\qed\\

 As explained before, Theorem~\ref{Prel:eq:sys:Sturm} shows that for a positive $t$ small enough, the system~\eqref{n=k=2(3)} becomes a real bivariate polynomial system of type $n=k=2$ having 6 positive solutions.

\begin{figure}[H]
\centering
\includegraphics[scale=1.5]{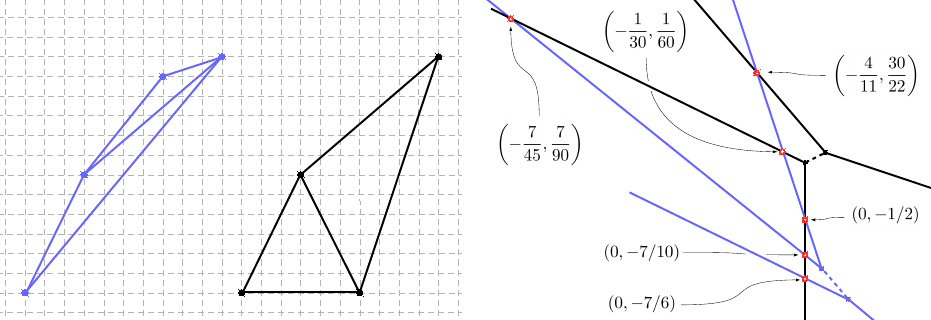}
\caption{To the left: The Newton polytopes and subdivisions associated to the equations of~\eqref{n=k=2(3)}. To the right: The tropical curves $T_1,T_2\subset\R^2$ intersecting at 6 transversal points.}
\label{Fig:SixTrans}
\end{figure}

\section{Second construction: non-transversal case}\label{Sec:Non-Transversal}

Following the notation of Subsection~\ref{Prel:Subsec:InterComp} for the case of two tropical curves in $\R^2$, we classify the types of mixed cells $\xi$ of $T_1\cap T_2$ at which the two tropical curves $T_1$ and $T_2$ intersect non-transversally. Let $\oc{\xi}$ denote the relative interior of such a linear piece $\xi$. Note that $\xi=\oc{\xi}$ if $\xi$ is a point. Consider now one linear piece $\xi:=\xi_1\cap\xi_2$ that is a result of the intersection, where $\xi_1$ and $\xi_2$ are cells of $T_1$ and $T_2$, and assume that it is non-transversal. We distinguish three types for such $\xi$.

\begin{itemize}

 \item[-] A cell $\xi$ is of \textbf{type (I)} if $\dim\xi=\dim\xi_1=\dim\xi_2=1$.\\
 
 \item[-] A cell $\xi$ is of \textbf{type (II)} if one of the cells $\xi_1$, or $\xi_2$ is a vertex, and the other cell is an edge.\\
 
 \item[-] A cell $\xi$ is of \textbf{type (III)} if $\xi_1$ and $\xi_2$ are vertices of the corresponding tropical curves.
\end{itemize}

\begin{figure}[H]
\centering
\includegraphics[scale=1.5]{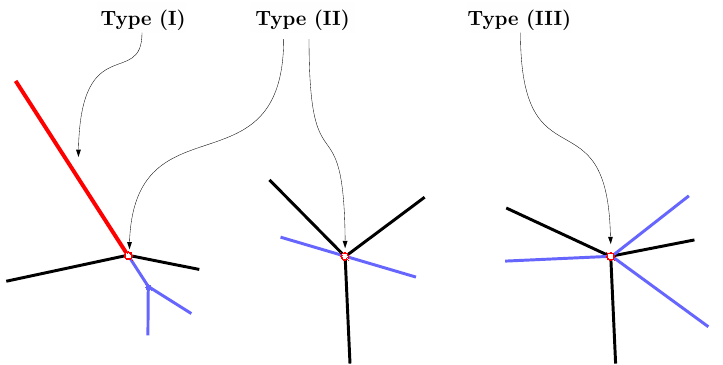}
\caption{The three types of non-transversal intersection cells.}
\label{Fig:Non-transv}
\end{figure}

\subsection{Reduced systems}

Recall that for an element $a(t)\in\mathbb{K}^*$, we denote by $\coef(a(t))$ the non-zero coefficient corresponding to the term of $a(t)$ with the smallest exponent of $t$.

\begin{definition}
Let $f=\sum_{w\in\Delta(f)\cap\mathbb{Z}^2}c_wz^w$ be a polynomial in $\mathbb{K}[z_1^{\pm 1},z_2^{\pm 1}]$ with $c_w\in\mathbb{K}^*$, and let $\xi$ denote a cell of $V_f^{\trop}$. The \textbf{reduced polynomial} $f_{|\xi}\in\mathbb{C}[z_1^{\pm 1},z_2^{\pm 1}]$ of $f$ with respect to $\xi$ is a polynomial defined as $$f_{|\xi}= \sum_{w\in\Delta_\xi\cap\mathcal{W}}\coef(c_w)z^w,$$ where $\mathcal{W}$ is the support of $f$.
\end{definition} We extend this definition to the following. Consider a system \begin{equation}\label{Prel:eq:sys1}
 f_1(z)=f_2(z)=0,
\end{equation} with $f_1,f_2$ in $\mathbb{K}[z_1^{\pm 1},z_2^{\pm 1}]$ defined as above. Assume that the intersection set $T_1\cap T_2$ of the tropical curves $T_1$ and $T_2$ is non-empty, and consider a mixed cell $\xi\in T_1\cap T_2$. As explained in Subsection~\ref{Prel:Subsec:InterComp}, the mixed cell $\xi$ is written as $\xi_1\cap\xi_2$ for some unique $\xi_1\in T_1$ and $\xi_2\in T_2$. 

\begin{definition}\label{Def:RedSys}
The \textbf{reduced system} of \eqref{Prel:eq:sys1} with respect to $\xi$ is the system $$f_{1|\xi_1}=f_{2|\xi_2}=0,$$ where $f_{i|\xi_i}$ is the reduced polynomial of $f_i$ with respect to $\xi_i$ for $i=1,2$.
\end{definition} Let $\mathcal{W}_1$ and $\mathcal{W}_2$ denote the supports of $f_1$ and $f_2$ respectively, and write $$f_1(z)=\sum_{v\in\mathcal{W}_1}a_vz^v\quad\text{and}\quad f_2(z)=\sum_{w\in\mathcal{W}_2}b_wz^w.$$ The following result generalizes to the case of a polynomial system defined on the same field with $n$ equations in $n$ variables. 

\begin{proposition}\label{Prop:from:K:toR}
If the system~\eqref{Prel:eq:sys1} has a solution $(\alpha,\beta)\in(\mathbb{K}^*)^2$ such that $\Val(\alpha,\beta)\in\oc{\xi}$, then $\left(\coef(\alpha),\coef(\beta)\right)\in(\mathbb{C}^*)^2$ is a solution of the reduced system
\begin{equation}\label{eq:red.sys0}
f_{1|\Delta_{\xi_1}}= f_{2|\Delta_{\xi_2}}=0.
\end{equation}

\end{proposition}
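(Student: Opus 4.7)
The plan is to substitute $(\alpha,\beta)$ into $f_1$, read off the terms of lowest order in $t$, and recognise that their aggregate leading coefficient is precisely $f_{1|\Delta_{\xi_1}}\bigl(\coef(\alpha),\coef(\beta)\bigr)$. Cancellation of this coefficient is forced by $f_1(\alpha,\beta)=0$, and an identical argument on $f_2$ gives the reduced system.

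First I would set $x:=\Val(\alpha,\beta)\in\oc{\xi}$. Because $\ord=-\val$ and the valuation is coordinate-additive on products, each monomial of $f_1$ satisfies
\[
\ord\bigl(a_v\,\alpha^{v_1}\beta^{v_2}\bigr)=\ord(a_v)-\langle v,x\rangle=-\bigl(\val(a_v)+\langle v,x\rangle\bigr).
\]
Hence the monomials of minimum order are exactly those $v\in\mathcal{W}_1$ that attain the maximum in $f_{1,\trop}(x)=\max_{v\in\mathcal{W}_1}\{\val(a_v)+\langle v,x\rangle\}$. Since $x$ lies in the relative interior of the cell $\xi_1$, the description of the dual subdivision in Subsection~\ref{Prel:Subsec:TropHS} identifies this set of maximisers with $\Delta_{\xi_1}\cap\mathcal{W}_1$; write $M_1$ for their common order.

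Next, I would compute the coefficient of $t^{M_1}$ in the series $f_1(\alpha,\beta)\in\mathbb{K}$. Only the monomials indexed by $\Delta_{\xi_1}\cap\mathcal{W}_1$ contribute, the rest having strictly larger order. Expanding each contributing monomial as
\[
a_v\,\alpha^{v_1}\beta^{v_2}=\coef(a_v)\,\coef(\alpha)^{v_1}\coef(\beta)^{v_2}\,t^{M_1}+(\text{higher order in }t),
\]
summing over $v\in\Delta_{\xi_1}\cap\mathcal{W}_1$, and using that $f_1(\alpha,\beta)=0$ forces the $t^{M_1}$-coefficient to vanish, yields
\[
\sum_{v\in\Delta_{\xi_1}\cap\mathcal{W}_1}\coef(a_v)\,\coef(\alpha)^{v_1}\coef(\beta)^{v_2}=0,
\]
which is exactly $f_{1|\Delta_{\xi_1}}\bigl(\coef(\alpha),\coef(\beta)\bigr)=0$. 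Replaying the argument with $f_2$ and $\xi_2$ gives $f_{2|\Delta_{\xi_2}}\bigl(\coef(\alpha),\coef(\beta)\bigr)=0$, and Convention~\ref{Conv:zeroOrd} together with $\alpha,\beta\in\mathbb{K}^*$ guarantees $(\coef(\alpha),\coef(\beta))\in(\mathbb{C}^*)^2$.

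The only delicate point is the equality (not mere inclusion) between the set of order-minimising indices and $\Delta_{\xi_1}\cap\mathcal{W}_1$, which is why the hypothesis $\Val(\alpha,\beta)\in\oc{\xi}$ asks for the relative interior: on the boundary of $\xi_1$ one would pick up additional indices from a lower-dimensional cell with a strictly larger dual polytope, and the associated reduced polynomial could differ from $f_{1|\Delta_{\xi_1}}$. Apart from this, the argument is a straightforward leading-term computation, and it extends verbatim to a system of $n$ equations in $n$ variables.
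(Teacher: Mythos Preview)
Your argument is correct and follows the same idea as the paper's proof: both identify the monomials of minimal order in $f_i(\alpha,\beta)$ as precisely those indexed by $\Delta_{\xi_i}\cap\mathcal{W}_i$ (using that $\Val(\alpha,\beta)$ lies in the relative interior of $\xi_i$), and then read off that the vanishing of the leading coefficient is exactly the reduced equation. The only cosmetic difference is that the paper performs the substitution $(z_1,z_2)\mapsto(t^{\ord(\alpha)}z_1,t^{\ord(\beta)}z_2)$ to factor out $t^{M}$ explicitly before taking $t$ small, whereas you extract the $t^{M_1}$-coefficient directly; your formulation is arguably cleaner since it avoids the informal ``$t>0$ small enough'' step and works purely at the level of series.
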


\begin{proof} 
Assume that~\eqref{Prel:eq:sys1} has a solution $(\alpha,\beta)\in(\mathbb{K}^*)^2$ such that $\Val(\alpha,\beta)\in\oc{\xi}$. Since $\Val(\alpha,\beta)$ belongs to the relative interior of each of $\xi_1$ and $\xi_2$, we have $$\max\{\langle \Val(\alpha,\beta), v\rangle +\val(a_v),~v\in\mathcal{W}_1\setminus(\mathcal{W}_1\cap\Delta_{\xi_1})\}<\langle\Val(\alpha,\beta), v\rangle +\val(a_v)\quad\text{for}\quad v\in\mathcal{W}_1\cap\Delta_{\xi_1}$$ and $$\max\{\langle \Val(\alpha,\beta), w\rangle +\val(b_w),~w\in\mathcal{W}_2\setminus (\mathcal{W}_2\cap\Delta_{\xi_2})\}<\langle\Val(\alpha,\beta), w\rangle +\val(b_w)\quad\text{for}\quad w\in\mathcal{W}_2\cap\Delta_{\xi_2}.$$ Consequently, since $\ord=-\val$, we have $M:=-\langle\Val(\alpha,\beta), v\rangle -\val(a_v)$ and $N:=-\langle\Val(\alpha,\beta), w\rangle -\val(b_w)$ are the orders of $f_1(\alpha,\beta)$ and $f_2(\alpha,\beta)$ respectively. Therefore, replacing $(z_1,z_2)$ by $\left( t^{\ord(\alpha)}z_1,t^{\ord(\beta)}z_2\right)$ in~\eqref{Prel:eq:sys1}, such a system becomes \begin{equation} \label{eq:sys1:xi}
  \begin{array}{ccccl}
    \displaystyle f_1\left( t^{\ord(\alpha)}z_1,t^{\ord(\beta)}z_2\right) & = & t^{M}\left(\sum_{v\in\mathcal{W}_1\cap\Delta_{\xi_1}}\coef(a_v)z^v + g_1(z)\right), \\ [10 pt]
    
    \displaystyle f_2\left( t^{\ord(\alpha)}z_1,t^{\ord(\beta)}z_2\right) & = & t^{N}\left(\sum_{w\in\mathcal{W}_2\cap\Delta_{\xi_2}}\coef(b_w)z^w + g_2(z)\right),\\
  \end{array}
\end{equation} where all the coefficients of the polynomials $g_1$ and $g_2$ of $\mathbb{RK}[z_1^{\pm 1},z_2^{\pm 1}]$ have positive orders. Since $(\alpha,\beta)$ is a non-zero solution of~\eqref{eq:red.sys0}, the system~\eqref{eq:sys1:xi} has a non-zero solution $(\alpha_0,\beta_0)$ with $\ord(\alpha_0)=\ord(\beta_0)=0$ and $\Coef(\alpha,\beta)=\Coef(\alpha_0,\beta_0)$. It follows that taking $t > 0$ small enough, we get that $\Coef(\alpha_0,\beta_0)$ is a non-zero solution of $$\sum_{v\in\mathcal{W}_1\cap\Delta_{\xi_1}}\coef(a_v)z^v=\sum_{w\in\mathcal{W}_2\cap\Delta_{\xi_2}}\coef(b_w)z^w=0.$$


\end{proof}

Note that Proposition~\ref{Prop:from:K:toR} holds true for any type of tropical intersection cell $\xi$. However, the other direction does not always hold true when $\xi$ is of type (I). Recall that a solution $(\alpha,\beta)\in(\mathbb{K}^*)^2$ is positive if $(\alpha,\beta)\in (\mathbb{RK}_{>0})^2$.

\begin{proposition}\label{Prop:fromR:toK}

Assume that $\dim\xi=0$ and that all solutions of~\eqref{Prel:eq:sys1} are non-degenerate. If the reduced system of~\eqref{Prel:eq:sys1} with respect to $\xi$ has a non-degenerate solution $(\rho_1,\rho_2)\in(\R_{>0})^2$, then~\eqref{Prel:eq:sys1} has a non-degenerate solution $(\alpha,\beta)\in(\mathbb{RK}_{>0})^2$ such that $\Val(\alpha,\beta)=\xi$ and $\Coef(\alpha,\beta)=(\rho_1,\rho_2)$.
\end{proposition}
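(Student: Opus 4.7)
The plan is to produce $(\alpha,\beta)$ by applying an implicit-function-type argument to a suitable monomial rescaling of~\eqref{Prel:eq:sys1}, taking $(\rho_1,\rho_2)$ as the initial value at $t=0$. Write $\xi=(\xi^{(1)},\xi^{(2)})\in\R^2$ and perform the substitution $z_i=t^{-\xi^{(i)}}w_i$. For each $v\in\mathcal{W}_i$, one has $\ord(a_v)-\langle\xi,v\rangle\geq M_i$ for some $M_i\in\R$, with equality exactly on $\mathcal{W}_i\cap\Delta_{\xi_i}$; this is because $\xi$ lies in the relative interior of $\xi_i$ and by duality between $V_{f_i}^{\trop}$ and the subdivision of $\Delta(f_i)$ (Subsection~\ref{Prel:Subsec:TropHS}). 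Dividing $f_i(t^{-\xi^{(1)}}w_1,t^{-\xi^{(2)}}w_2)$ by $t^{M_i}$ transforms~\eqref{Prel:eq:sys1} into a system
$$
G_i(w_1,w_2,t)\ :=\ f_{i|\xi_i}(w_1,w_2)\ +\ H_i(w_1,w_2,t)\ =\ 0,\qquad i=1,2,
$$
where each $H_i$ is a Laurent polynomial in $w_1,w_2$ whose coefficients are real locally convergent generalized Puiseux series of strictly positive order in $t$. In particular, $H_i(w_1,w_2,0)=0$.

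Next, I apply the implicit function theorem at the point $(w_1,w_2,t)=(\rho_1,\rho_2,0)$. By construction $G_i(\rho_1,\rho_2,0)=f_{i|\xi_i}(\rho_1,\rho_2)=0$, and the Jacobian $\det\bigl(\partial_{w_j}G_i\bigr)_{|t=0,w=(\rho_1,\rho_2)}$ coincides with the Jacobian of the reduced system at $(\rho_1,\rho_2)$, which is non-zero by hypothesis. When the exponents appearing in the $H_i$ all lie in $\tfrac{1}{q}\Z$ for some integer $q$, after the substitution $s=t^{1/q}$ the $G_i$ become real analytic in $(w_1,w_2,s)$ on a neighborhood of $((\rho_1,\rho_2),0)$, and the real-analytic IFT produces unique real analytic germs $w_1(s),w_2(s)$ with $w_i(0)=\rho_i$ solving the system for $s$ near $0$. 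In full generality, the same conclusion is the content of the Hensel/Newton-lifting results of~\cite{Rab12,OP13,Br-deMe11} cited in the introduction, which yield a unique solution $(w_1(t),w_2(t))$ in $\mathbb{RK}^2$ with $(w_1(0),w_2(0))=(\rho_1,\rho_2)$.

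Setting $\alpha(t):=t^{-\xi^{(1)}}w_1(t)$ and $\beta(t):=t^{-\xi^{(2)}}w_2(t)$ gives a solution of~\eqref{Prel:eq:sys1}: its leading $t$-term is $\rho_i t^{-\xi^{(i)}}$ with $\rho_i>0$, so $(\alpha,\beta)\in(\mathbb{RK}_{>0})^2$, $\Coef(\alpha,\beta)=(\rho_1,\rho_2)$, and $\Val(\alpha,\beta)=\xi$. Non-degeneracy of $(\alpha,\beta)$ as a solution of~\eqref{Prel:eq:sys1} follows from the chain rule applied to $z_i=t^{-\xi^{(i)}}w_i$ together with the factoring out of $t^{M_i}$: the Jacobian determinant $\det(\partial_{z_j}f_i)(\alpha,\beta)$ equals the product of $\det(\partial_{w_j}G_i)$ evaluated at $(w_1(t),w_2(t),t)$ with a non-vanishing monomial in $t$. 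The former determinant tends to the non-zero Jacobian of the reduced system at $(\rho_1,\rho_2)$ as $t\to 0$, so it is non-zero for $t>0$ small enough, making $(\alpha,\beta)$ non-degenerate.

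The main technical obstacle is the last paragraph of the IFT step: ensuring that the implicit-function solution actually lies in $\mathbb{K}$ (a convergent generalized Puiseux series), not merely in a formal completion. For standard Puiseux exponents this is immediate from the classical real-analytic IFT after the change of variable $s=t^{1/q}$; for the general valuation used here the convergence and the existence of an isolated lift rely on the Hensel-type lifting lemmas of~\cite{Rab12,OP13,Br-deMe11}. The global non-degeneracy assumption on~\eqref{Prel:eq:sys1} guarantees that the lift produced this way is an isolated point of the zero locus in $(\mathbb{K}^*)^2$, consistent with its interpretation as the unique solution over $\xi$ with prescribed first-order behaviour $(\rho_1,\rho_2)$.
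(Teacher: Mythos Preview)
Your argument is correct, but it follows a different route than the paper. The paper does not lift $(\rho_1,\rho_2)$ directly; instead it argues by a global counting/bijection. Using \cite[Proposition~3.11]{Br-deMe11}, the number of solutions of~\eqref{Prel:eq:sys1} in $(\mathbb{K}^*)^2$ with valuation $\xi$ equals $\MV(\Delta_{\xi_1},\Delta_{\xi_2})$, and by Bernstein--Kushnirenko the reduced system has the same number of solutions in $(\mathbb{C}^*)^2$. Combined with Proposition~\ref{Prop:from:K:toR}, this forces the map $z\mapsto\Coef(z)$ to be a bijection between these two solution sets; in particular $(\rho_1,\rho_2)$ is hit by some solution $(\alpha,\beta)$. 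Reality of $(\alpha,\beta)$ then comes from a conjugation trick: if $(\alpha,\beta)\notin(\mathbb{RK}^*)^2$ then $(\alpha,\beta)$ and $(\bar\alpha,\bar\beta)$ would be two distinct preimages of $(\rho_1,\rho_2)$, contradicting bijectivity.

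Your implicit-function/Hensel approach is more local and constructive: it only uses non-degeneracy of the single solution $(\rho_1,\rho_2)$ of the reduced system, and reality is automatic from the real-analytic IFT (or, in the general-exponent case, from uniqueness of the Hensel lift together with conjugation). The paper's approach, by contrast, trades the analytic lifting step for the black-box count from \cite{Br-deMe11}, at the cost of implicitly needing the reduced system to have the full $\MV(\Delta_{\xi_1},\Delta_{\xi_2})$ distinct solutions for the bijection argument to go through. Both are valid; yours is arguably tighter with respect to the stated hypotheses.
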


\begin{proof} E. Brugall\'e and L. L\'opez De Medrano showed in~\cite[Proposition~3.11]{Br-deMe11} (see also~\cite{Kat09,Rab12,OP13} for more details for higher dimension and more exposition relating toric varieties and tropical intersection theory) that the number of solutions of~\eqref{Prel:eq:sys1} with valuation $\xi$ is equal to the mixed volume $\MV(\Delta_{\xi_1},\Delta_{\xi_2})$ of $\Delta_{\xi_1}$ and $\Delta_{\xi_2}$ (recall that $\Delta_{\xi}=\Delta_{\xi_1}+\Delta_{\xi_2}$). Since we assumed that~\eqref{Prel:eq:sys1} has only non-degenerate solutions in $(\mathbb{K}^*)^2$, we get $\MV(\Delta_{\xi_1},\Delta_{\xi_2})$ distinct solutions of the system~\eqref{Prel:eq:sys1} in $(\mathbb{K}^*)^2$ with given valuation $\xi$. By Proposition~\ref{Prop:from:K:toR}, if $f_1(z)=f_2(z)=0$ and $\Val(z)=\xi$, then $\Coef(z)$ is a solution of the reduced system of~\eqref{Prel:eq:sys1} with respect to $\xi$. The number of solutions in $(\mathbb{C}^*)^2$ of the reduced system is $\MV(\Delta_{\xi_1},\Delta_{\xi_2})$. Assuming that this reduced system has $\MV(\Delta_{\xi_1},\Delta_{\xi_2})$ distinct solutions in $(\mathbb{C}^*)^2$, we obtain that the map $z~\mapsto\Coef(z)$ induces a bijection from the set of solutions of~\eqref{Prel:eq:sys1} in $(\mathbb{K}^*)^2$ with valuation $\xi$ onto the set of solutions in $(\mathbb{C}^*)^2$ of the reduced system of~\eqref{Prel:eq:sys1} with respect to $\xi$.

If $z$ is a solution of~\eqref{Prel:eq:sys1} in $(\mathbb{K}^*)^2$ with $\Val(z)=\xi$ and $\Coef(z)\in(\R^*)^2$, then $z\in(\mathbb{RK}^*)^2$ since otherwise, $z,\bar{z}$ would be two distinct solutions of~\eqref{Prel:eq:sys1} in $(\mathbb{K}^*\setminus\mathbb{RK}^*)^2$ such that $\Val(z)=\Val(\bar{z})=\xi$ and $\Coef(z)=\Coef(\bar{z})$.
\end{proof}

\subsection{Normalized systems}
Recall that a polynomial system is said to be of type $n=k=2$ if it is supported on a set of five distinct points in $\mathbb{Z}^2$ and consists of two equations in two variables. In what follows, we consider a system of type $n=k=2$ defined on the field of real generalized locally convergent Puiseux series. 

\begin{definition}
A \textbf{highly non-degenerate} system is a system consisting of two polynomials in two variables, and satisfying that no three points of its support belong to a line.
\end{definition}

\begin{lemma}\label{L:Normal}

Given any highly non-degenerate system of polynomials in $\mathbb{RK}[z_1^{\pm 1},z_2^{\pm 1}]$ of type $n=k=2$, one can associate to it a highly non-degenerate system 

\begin{equation} \label{eq:s_1}
  \begin{array}{ccccl}
    \displaystyle a_0z^{w_0} + a_1z^{w_1} + a_2z^{w_2} + a_3 t^{\alpha}z^{w_3} & = & 0,\\ [4 pt]
    
    \displaystyle b_0z^{w_0} + b_1z^{w_1} + b_2z^{w_2} + b_4t^{\beta}z^{w_4} & = & 0,\\
  \end{array}
\end{equation} with equations in $\mathbb{RK}[z_1^{\pm 1},z_2^{\pm 1}]$, that has the same number of non-degenerate positive solutions, where all $a_i$ and $b_j$ are in $\mathbb{RK}^*$ and verify $\ord(a_i)=\ord(b_j)=0$, all $w_i$ are in $\mathbb{Z}^2$ and both $\alpha$, $\beta$ are real numbers.
\end{lemma}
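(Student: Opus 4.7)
The plan is to bring the input system to the required form through a sequence of operations that each preserve the number of non-degenerate positive solutions in $(\mathbb{RK}_{>0})^{2}$: $\mathbb{RK}$-linear combinations of the two equations, multiplication of each equation by a nonzero element of $\mathbb{RK}^{*}$, and the change of variables $(z_{1}, z_{2}) \mapsto (t^{\gamma_{1}}z_{1}, t^{\gamma_{2}}z_{2})$ with $\gamma_{1}, \gamma_{2} \in \mathbb{R}$, which maps $(\mathbb{RK}_{>0})^{2}$ bijectively onto itself and multiplies the coefficient of $z^{w_{i}}$ by $t^{\langle \gamma, w_{i}\rangle}$.

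First I would adjust the supports by replacing $(F_{1}, F_{2})$ with two suitable $\mathbb{RK}$-linear combinations. Let $M$ denote the $2 \times 5$ matrix over $\mathbb{RK}$ whose columns are indexed by the five monomials of the total support, and write $(c_{v}, d_{v})$ for its column at $v$. A polynomial $\lambda F_{1} + \mu F_{2}$ in the pencil has zero coefficient at $v$ precisely when $\lambda c_{v} + \mu d_{v} = 0$, a condition cutting out a single line in $\mathbb{P}(\mathbb{RK}^{2})$. Thus each of the five support monomials singles out one such line, and the pencil contains five distinguished polynomials each missing exactly one monomial. Choosing two of them --- missing $w_{3}$ and $w_{4}$ respectively --- and relabeling the other three as $w_{0}, w_{1}, w_{2}$ produces a new system $(g_{1}, g_{2})$ with supports $\{w_{0}, w_{1}, w_{2}, w_{3}\}$ and $\{w_{0}, w_{1}, w_{2}, w_{4}\}$. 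Since $M$ has rank two, the pair $(w_{3}, w_{4})$ can be chosen so that the relevant $2 \times 2$ minors of $M$ are nonzero and all remaining coefficients in $g_{1}, g_{2}$ are nonzero as well.

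Next I would normalize the orders of the six coefficients at the shared monomials. Writing $g_{1} = \sum_{i=0}^{3} c_{i} z^{w_{i}}$ and $g_{2} = \sum_{i \in \{0,1,2,4\}} d_{i} z^{w_{i}}$, after the substitution $z \mapsto t^{\gamma} z$ and the scaling $g_{1} \mapsto t^{\sigma_{1}} g_{1}$, the order of the coefficient of $z^{w_{i}}$ in the first equation becomes $\ord(c_{i}) + \langle \gamma, w_{i}\rangle + \sigma_{1}$. Demanding this to vanish for $i = 0, 1, 2$ is a $3 \times 3$ affine linear system in the real unknowns $(\gamma_{1}, \gamma_{2}, \sigma_{1})$ whose matrix has rows $(w_{i}^{1}, w_{i}^{2}, 1)$; this matrix is invertible because $w_{0}, w_{1}, w_{2}$ are affinely independent, which is precisely where the highly non-degenerate hypothesis (no three of the five support points are collinear) enters. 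The unique solution fixes $(\gamma_{1}, \gamma_{2}, \sigma_{1})$, and scaling the second equation by a suitable $t^{\sigma_{2}}$ so that the coefficient of $z^{w_{0}}$ in it has order zero then determines $\sigma_{2}$. The coefficients of $z^{w_{3}}$ in the first equation and of $z^{w_{4}}$ in the second then acquire some real orders $\alpha, \beta$, which one rewrites as $a_{3} t^{\alpha}$ and $b_{4} t^{\beta}$ with $\ord(a_{3}) = \ord(b_{4}) = 0$, yielding the prescribed form.

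The principal technical point is that, once $(\gamma_{1}, \gamma_{2}, \sigma_{2})$ is fixed, the coefficients of $z^{w_{1}}$ and $z^{w_{2}}$ in the rescaled second equation must also have order zero; this imposes two compatibility conditions among the orders of the $2 \times 2$ minors of $M$ that are not \emph{a priori} automatic. The plan is to exploit the remaining freedom in the first step --- the choice of $(w_{3}, w_{4})$ among the ten pairs of support points and the $\mathbb{RK}^{*}$ scalars entering the two linear combinations --- and to invoke the tropical Pl\"ucker relations among the minors $\Delta_{ij}$, which constrain their orders sufficiently to guarantee that a compatible configuration can always be realized under the highly non-degenerate hypothesis. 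The output support is a subset of the input support, so the highly non-degenerate condition is automatically preserved.
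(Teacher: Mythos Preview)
Your overall strategy --- linear combinations to pick supports, then a monomial substitution $z\mapsto t^{\gamma}z$ together with scalings by $t^{\sigma_i}$ to normalize the orders of the shared coefficients --- is exactly the paper's. The first step and the normalization of the three coefficients in the first equation are fine, and your use of the affine independence of $w_0,w_1,w_2$ (from the highly non-degenerate hypothesis) is the right place for that hypothesis to enter.

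The gap is precisely where you flag it. After $\gamma,\sigma_1,\sigma_2$ are fixed, the orders of the coefficients of $z^{w_1}$ and $z^{w_2}$ in the second equation are forced to be
\[
\big(\ord(d_i)-\ord(c_i)\big)-\big(\ord(d_0)-\ord(c_0)\big),\qquad i=1,2,
\]
and there is no reason these vanish. Your proposed remedy does not close the gap. Once you decide which monomial each of $g_1,g_2$ is to miss, the linear combination is determined up to a single $\mathbb{RK}^*$ scalar, and that scalar is already your $\sigma_i$; there is no further ``freedom in the $\mathbb{RK}^*$ scalars''. As for the tropical Pl\"ucker relations: for a choice $(w_3,w_4)=(j,k)$ the compatibility you need reads $\ord(\Delta_{ik})-\ord(\Delta_{ij})$ constant over the three remaining $i$, i.e.\ two equalities among the $\ord(\Delta_{\bullet\bullet})$. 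The three-term tropical Pl\"ucker relation only says that a certain minimum is attained at least twice; it does not force the two specific sums you need to coincide, so invoking it does not by itself select a working pair. You still have to prove that \emph{some} ordered pair $(j,k)$ works, and you have not done so.

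The paper closes exactly this gap by a short case analysis which, in effect, \emph{chooses} the correct pair for you. Starting from the intermediate form where the two equations already miss $\tilde w_4$ and $\tilde w_3$ respectively (your step~1), it compares $\alpha_i-\alpha_1$ with $\beta_i-\beta_1$ for $i=0,2$. In each case it subtracts a suitable scalar multiple of one equation from the other so as to kill one of the shared monomials (for instance $\tilde w_1$ in the case $\alpha_0-\alpha_1<\beta_0-\beta_1$ and $\alpha_2-\alpha_1<\beta_2-\beta_1$), and then relabels. The point of the subtraction is that the sign hypotheses guarantee the leading terms of the remaining shared coefficients all come from the \emph{same} original equation, so after the subtraction the orders of the shared coefficients in the two equations agree identically; the monomial substitution then normalizes all six of them at once. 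In your language this amounts to exhibiting, case by case, an ordered pair $(w_3,w_4)$ for which the compatibility holds. Replacing the appeal to Pl\"ucker relations by this concrete argument would complete your proof.
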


\begin{proof}
Using linear combinations, any system of type $n=k=2$ can be reduced to a system
   
   \begin{equation} \label{eq:s_2}
   \begin{array}{lccl}
    \displaystyle  c_0t^{\alpha_0}z^{\tilde{w}_0} + c_1t^{\alpha_1}z^{\tilde{w}_1} + c_2 t^{\alpha_2}z^{\tilde{w}_2} + c_3t^{\alpha_3}z^{\tilde{w}_3} & = & 0, \\ [10 pt]
    
    \displaystyle d_0t^{\beta_0}z^{\tilde{w}_0} + d_1t^{\beta_1}z^{\tilde{w}_1} + d_2t^{\beta_2} z^{\tilde{w}_2} + d_4t^{\beta_4}z^{\tilde{w}_4} & = & 0\\
  \end{array}
   \end{equation} that has the same number of non-degenerate positive solutions, where all $c_i$ and $d_j$ are in $\in\mathbb{RK}^*$ and verify $\ord(c_i)=\ord(d_j)=0$, all $\tilde{w}_i$ are in $\mathbb{Z}^2$ and all exponents of $t$ are real numbers. Assume first that $\alpha_i-\alpha_1\neq\beta_i-\beta_1$ for $i=0,2$. By symmetry, the different possibilities of strict inequalities can be reduced to only two cases.

\begin{itemize}
 
 \item First case: $\displaystyle\alpha_0 -\alpha_1 <\beta_0 -\beta_1$ and $\displaystyle\alpha_2 -\alpha_1<\beta_2 -\beta_1$.\\ 
 Since we are interested in non-degenerate positive solutions, we may suppose that $\tilde{w}_0=(0,0)$. The system 
 \begin{equation}\label{eq:s_3}
   \begin{array}{ccl}
    \displaystyle (c_0/c_1)t^{\alpha_0-\alpha_1}z^{\tilde{w}_0} + z^{\tilde{w}_1} + (c_2/c_1) t^{\alpha_2 - \alpha_1}z^{\tilde{w}_2} + (c_3/c_1)t^{\alpha_3-\alpha_1}z^{\tilde{w}_3} & = & 0,\\[10 pt]      
    \displaystyle \tilde{c}_0t^{\alpha_0 - \alpha_1}z^{\tilde{w}_0} + \tilde{c}_2t^{\alpha_2 - \alpha_1}z^{\tilde{w}_2} + (c_3/c_1)t^{\alpha_3 -\alpha_1}z^{\tilde{w}_3} - (d_4/d_1) t^{\beta_4-\beta_1}z^{\tilde{w}_4}& = & 0\\
   \end{array}
 \end{equation} has the same number of non-degenerate positive solutions as~\eqref{eq:s_2}. Indeed, the first equation of~\eqref{eq:s_3} is obtained by dividing the first equation of~\eqref{eq:s_2} by $c_1t^{\alpha_1}$, whereas the second equation of~\eqref{eq:s_3} is obtained by dividing the first equation of~\eqref{eq:s_2} by $c_1t^{\alpha_1}$ and subtracting from it the second equation of~\eqref{eq:s_2} divided by $d_1t^{\beta_1}$. Note that $\displaystyle\coef(\tilde{c}_i) = \coef(c_i/c_1)$ and $\ord(\tilde{c}_1)=0$ for $i=0,2$. We divide both equations of~\eqref{eq:s_3} by $t^{\alpha_0 -\alpha_1}$ and set $w_3 =\tilde{w}_1 $, $w_2=\tilde{w}_3$, $w_1=\tilde{w}_2$ and $w_i=\tilde{w}_i$ for $i=0,4$. Finally replacing $(z_1,z_2)$ by $(t^kz_1,t^lz_2)$ in~\eqref{eq:s_3} for some real numbers $k$ and $l$ satisfying $\langle(k,l),w_2\rangle = \alpha_0 - \alpha_3$ and $\langle(k,l),w_1\rangle = \alpha_0 - \alpha_2$ does not change the number of non-degenerate positive solutions of~\eqref{eq:s_3}. This gives a system of the form~\eqref{eq:s_1} with the same number of non-degenerate positive solutions as~\eqref{eq:s_2}.\\

 \item Second case: $\displaystyle\alpha_0 -\alpha_1< \beta_0 -\beta_1$ and $\displaystyle\alpha_2 -\alpha_1 > \beta_2 -\beta_1$.\\
Note that this case gives $\alpha_2 -\alpha_0> \beta_2 - \beta_0$. As done before, we may suppose that $\tilde{w}_4=(0,0)$. The system 
  \begin{equation}\label{eq:s_4}
   \begin{array}{ccccl}
    \displaystyle (d_1/d_0)t^{\beta_1-\beta_0}z^{\tilde{w}_1} + (d_2/d_0)t^{\beta_2 - \beta_0}z^{\tilde{w}_2} + (d_4/d_0)t^{\beta_4-\beta_0}z^{\tilde{w}_4} + z^{\tilde{w}_0} & = & 0,\\[10 pt]   
     
    \displaystyle \tilde{d}_1t^{\beta_1 - \beta_0}z^{\tilde{w}_1} + \tilde{d}_2 t^{\beta_2 - \beta_0} z^{\tilde{w}_2}  - (c_3/c_0)t^{\alpha_3-\alpha_0}z^{\tilde{w}_3} + (d_4/d_0)t^{\beta_4 -\beta_0}z^{\tilde{w}_4} & = & 0 \\
   \end{array}
  \end{equation} has the same number of non-degenerate positive solutions as~\eqref{eq:s_2}. Indeed, the first equation of~\eqref{eq:s_4} is obtained by dividing the second equation of~\eqref{eq:s_2} by $d_0t^{\beta_0}$, whereas the second equation of~\eqref{eq:s_4} is obtained by dividing the second equation of~\eqref{eq:s_2} by $d_0t^{\beta_0}$ and subtracting from it the first equation of~\eqref{eq:s_2} divided by $c_0t^{\alpha_0}$. Note that $\displaystyle\coef(\tilde{d}_i) = \coef(d_i/d_0)$ and $\ord(\tilde{d}_i)=0$ for $i=1,2$. We divide both equations of~\eqref{eq:s_4} by $t^{\beta_4 -\beta_0}$ and set $w_0 =\tilde{w}_4 $, $w_4=\tilde{w}_0$ and $w_i=\tilde{w}_i$ for $i=1,2,3$. Finally replacing $(z_1,z_2)$ by $(t^kz_1,t^lz_2)$ in~\eqref{eq:s_4} for some real numbers $k$ and $l$ satisfying $\langle(k,l),w_1\rangle = \beta_4 - \beta_1$ and $\langle(k,l),w_2\rangle = \beta_4 - \beta_2$ does not change the number of non-degenerate positive solutions of~\eqref{eq:s_5}. This gives a system of the form~\eqref{eq:s_1} with the same number of non-degenerate positive solutions as~\eqref{eq:s_2}.
\end{itemize}

Assume now that we have $\alpha_i-\alpha_1 = \beta_i-\beta_1$ for either $i=0$ or $i=2$. The case where we have equality for both $i=0$ and $i=2$ is trivial. Without loss of generality, we may suppose that $\alpha_0-\alpha_1 = \beta_0-\beta_1$ and $\alpha_2-\alpha_1 < \beta_2-\beta_1$. Note that this case gives $\beta_0 - \beta_2< \alpha_0 -\alpha_2$. Since we are interested in non-degenerate positive solutions, we may suppose that $\tilde{w}_0=(0,0)$. The system 
\begin{equation} \label{eq:s_5}
   \begin{array}{ccl}
    \displaystyle (d_0/d_2)t^{\beta_0 - \beta_2}z^{\tilde{w}_0} + (d_1/d_2)t^{\beta_1 - \beta_2}z^{\tilde{w}_1} + z^{\tilde{w}_2} + (d_4/d_2)t^{\beta_4 - \beta_2}z^{\tilde{w}_4} & = & 0,\\[10 pt]      
    \displaystyle \tilde{d}_0t^{\beta_0 - \beta_2}z^{\tilde{w}_0} + \tilde{d}_1t^{\beta_1 - \beta_2}z^{\tilde{w}_1} - (c_3/c_2)t^{\alpha_3 -\alpha_0}z^{\tilde{w}_3} + (d_4/d_2) t^{\beta_4-\beta_2}z^{\tilde{w}_4}& = & 0\\
   \end{array}
 \end{equation} has the same number of non-degenerate positive solutions of~\eqref{eq:s_2}. Indeed, the first equation of~\eqref{eq:s_5} is obtained by dividing the second equation of~\eqref{eq:s_2} by $d_2t^{\beta_2}$, whereas the second equation of~\eqref{eq:s_5} is obtained by dividing the second equation of~\eqref{eq:s_2} by $d_2t^{\beta_2}$ and subtracting from it the first equation of~\eqref{eq:s_2} divided by $c_2t^{\alpha_2}$. Note that $\displaystyle\coef(\tilde{d}_i) = \coef(d_i/d_2)$ and $\ord(\tilde{d}_i)=0$ for $i=0,1$. We divide both equations of~\eqref{eq:s_5} by $t^{\beta_0 -\beta_2}$ and set $w_2 =\tilde{w}_4 $, $w_4=\tilde{w}_2$ and $w_i=\tilde{w}_i$ for $i=0,1,3$. Finally replacing $(z_1,z_2)$ by $(t^kz_1,t^lz_2)$ in~\eqref{eq:s_5} for some real numbers $k$ and $l$ satisfying $\langle(k,l),w_1\rangle = \beta_1 - \beta_0$ and $\langle(k,l),w_2\rangle = \beta_4 - \beta_0$ does not change the number of non-degenerate positive solutions of~\eqref{eq:s_5}. This gives a system of the form~\eqref{eq:s_1} with the same number of non-degenerate positive solutions as~\eqref{eq:s_2}.
\end{proof}
 Consider a system~\eqref{eq:s_1} satisfying all the hypotheses of Lemma~\ref{L:Normal}. Since we are interested in its non-degenerate positive solutions, we may assume that $w_0=(0,0)$. Moreover, without loss of generality, we may assume that $a_1=b_1=1$. For the simplicity of further computations, we make the following change of coordinates. Let $m_1$ be the greatest common divisor of the coordinates of $w_1$. Setting $y_1=z^{\frac{w_1}{m_1}}$ and choosing any basis of $\mathbb{Z}^2$ with first vector $\frac{1}{m_1}\cdot w_1$, we get a monomial change of coordinates $(z_1,z_2)\mapsto (y_1,y_2)$ of $(\mathbb{RK}^*)^2$ such that $z^{w_1}=y_1^{m_1}$ and $z^{w_2}=y_1^{m_2}y_2^{n_2}$. Replacing $y_2$ by $y_2^{-1}$ if necessary, we assume that $n_2>0$. Indeed, $n_2\neq 0$, since by assumption the support of~\eqref{eq:s_1} is highly non-degenerate. With respect to these new coordinates, we obtain the system 
 
\begin{equation}\label{eq:mod:s_1}
  \begin{array}{ccccl}
    \displaystyle a_0 + y_1^{m_1} + a_2y_1^{m_2}y_2^{n_2} + a_3 t^{\alpha}y_1^{m_3}y_2^{n_3} & = & 0,\\ [4 pt]
    
    \displaystyle b_0 + y_1^{m_1} + b_2y_1^{m_2}y_2^{n_2} + b_4t^{\beta}y_1^{m_4}y_2^{n_4} & = & 0,\\
  \end{array}
\end{equation} that has the same number of non-degenerate positive solutions as~\eqref{eq:mod:s_1}. In what follows, we will work on a \textbf{normalized system} of the form~\eqref{eq:mod:s_1}, i.e. a highly non-degenerate system~\eqref{eq:mod:s_1} that satisfies the hypothesis of Lemma~\ref{L:Normal}. We will state two results, the proof of which are contained in~\cite{E16}, that are important for the construction.

A normal fan of a 2-dimensional convex polytope in $\mathbb{R}^2$ is the complete fan with apex at the origin, and 1-dimensional cones directed by the outward normal vectors of the $1$-faces of this polytope. Recall that $(0,0)$, $(m_1,0)$ and $(m_2,n_2)$ do not belong to a line (since~\eqref{eq:mod:s_1} is highly non-degenerate) and denote by $\Delta$ the triangle with vertices $(0,0)$, $(m_1,0)$ and $(m_2,n_2)$. Let $\mathcal{E}\subset\mathbb{R}^2$ denote the normal fan of $\Delta$. The fan $\mathcal{E}$ together with $\Delta$ are represented in Figure~\ref{Fig:BaseFans:EandGen}. The 1-dimensional cones of $\mathcal{E}$ are $\mathsf{L}_0=\{\lambda(0,-m_1)|\ \lambda\geq0\}$, $\mathsf{L}_1=\{\lambda(n_2, m_1 - m_2)|\ \lambda\geq0\}$ and $\mathsf{L}_2=\{\lambda(-n_2, m_2)|\ \lambda\geq0\}$. Let $\mathsf{C}_0$ (resp. $\mathsf{C}_1$, $\mathsf{C}_2$) denote the $2$-dimensional cone generated by the two vectors $(0,-m_1)$ and $(-n_2, m_2)$ (resp. $(0,-m_1)$ and $(n_2, m_1 - m_2)$, $(n_2, m_1 - m_2)$ and $(-n_2, m_2)$), see Figure~\ref{Fig:BaseFans:EandGen}. In what follows, for $i=0,1,2$, let $\accentset{\circ}{\mathsf{C}}_i$ denote the relative interior of $\mathsf{C}_i$ and $\accentset{\circ}{\mathsf{L}}_i$ denote the relative interior of $\mathsf{L}_i$. Finally, denote by $T_1$ (resp. $T_2$) the tropical curve associated to the first (resp. second) equation of~\eqref{eq:mod:s_1}.\\

\begin{theorem}[\cite{E16}] \label{Th.transv}
 For $i=0,1,2$, the relatively open $2$-cone $\accentset{\circ}{\mathsf{C}}_i$ cannot contain more than one tropical transversal intersection point of~\eqref{eq:mod:s_1}. Moreover, a $1$-cone of $\mathcal{E}$ does not contain a transversal intersection point of $T_1$ and $T_2$. Finally, if $T_1$ and $T_2$ intersect non-transversally at a cell $\xi$, then $\xi$ is contained in a 1-cone of the fan $\mathcal{E}$.
 \end{theorem}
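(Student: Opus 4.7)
The plan is to exploit the dominance structure of the three common monomials. Let $v_0 = (0,0)$, $v_1 = (m_1, 0)$, $v_2 = (m_2, n_2)$ denote the three vertices of $\Delta$, shared between the supports of $f_1$ and $f_2$. Since each of the coefficients $a_0, 1, a_2, b_0, 1, b_2$ has order zero, in the open $2$-cone $\accentset{\circ}{\mathsf{C}}_i$ (the normal cone to $v_i$ in $\mathcal{E}$), the exponent $v_i$ strictly maximises $\langle x, v\rangle$ among $\{v_0, v_1, v_2\}$. Hence, inside $\accentset{\circ}{\mathsf{C}}_i$, the tropical polynomial $f_{j,\trop}$ ($j = 1,2$) collapses to the maximum of just two terms: the $v_i$-term and the fourth term (with exponent $w_3$ for $j=1$ and $w_4$ for $j=2$). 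The tropical curve $T_j \cap \accentset{\circ}{\mathsf{C}}_i$ is therefore contained in a single affine line, orthogonal to $w_{2+j} - v_i$.

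From this, I would derive the first assertion as follows. In $\accentset{\circ}{\mathsf{C}}_i$, the curves $T_1$ and $T_2$ are each (possibly empty) subsets of single lines with normal directions $w_3 - v_i$ and $w_4 - v_i$. The highly non-degenerate hypothesis ensures that no three of the five support points are collinear; in particular $v_i, w_3, w_4$ are not collinear, so $w_3 - v_i$ and $w_4 - v_i$ are linearly independent. The two lines are therefore distinct and non-parallel, meeting in at most one point. This simultaneously bounds the number of transversal intersection points inside $\accentset{\circ}{\mathsf{C}}_i$ by one and shows that any intersection cell meeting $\accentset{\circ}{\mathsf{C}}_i$ consists of a single transversal point.

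For the second assertion, I would examine what happens on a $1$-cone $\mathsf{L}_i$, the outward normal ray to an edge $[v_{i'}, v_{i''}]$ of $\Delta$. On $\mathsf{L}_i$, the contributions $\langle x, v_{i'}\rangle$ and $\langle x, v_{i''}\rangle$ coincide and strictly exceed $\langle x, v\rangle$ for the third vertex $v$ of $\Delta$. Wherever on $\mathsf{L}_i$ the fourth term is dominated, both $T_1$ and $T_2$ contain a $1$-cell in the direction of $\mathsf{L}_i$. At a would-be transversal intersection $p \in \mathsf{L}_i$, one of two things would have to hold: either both $\xi_1, \xi_2$ are $1$-cells along $\mathsf{L}_i$, in which case their dual edges are parallel (perpendicular to $\mathsf{L}_i$) and the intersection is non-transversal by definition; or at least one of the $\xi_j$ is the vertex of $T_j$ produced at the point where the fourth monomial joins the tie, giving $\dim \xi_j = 0$ and again ruling out transversality (which requires both cells to be $1$-dimensional with non-parallel duals). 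Hence $\mathsf{L}_i$ contains no transversal intersection.

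The third assertion then falls out: a non-transversal intersection cell $\xi$ cannot meet any $\accentset{\circ}{\mathsf{C}}_i$ by the conclusion of the first step, and since $\mathbb{R}^2 = \bigcup_i \accentset{\circ}{\mathsf{C}}_i \cup \bigcup_i \mathsf{L}_i$, connectedness of $\xi$ forces it into a single $\mathsf{L}_i$. The hardest part of the argument is the case analysis on each $\mathsf{L}_i$: one must verify that the overlap of the two shared-monomial $1$-cells of $T_1$ and $T_2$ along $\mathsf{L}_i$ — accounting for where the fourth monomials do or do not cut in — is always non-transversal, which ultimately reduces to a careful accounting of the dimensions of the dual polytopes through the correspondence of Proposition~\ref{Prop:MixedSubd}.
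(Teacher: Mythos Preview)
The paper does not give its own proof of Theorem~\ref{Th.transv}: the result is quoted from~\cite{E16}, and the only justification offered in the present paper is the pictorial hint in Figure~\ref{fig4}, which lists the possible dispositions of $T_1$ relative to the fan $\mathcal{E}$. Your argument is precisely the analytic content behind that picture, and it is correct.

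Two small points will make the write-up airtight. In the first step, say explicitly that inside $\accentset{\circ}{\mathsf{C}}_i$ no $v_j$ with $j\neq i$ can participate in a tie at the maximum, so the only edge of $T_1$ whose relative interior can meet $\accentset{\circ}{\mathsf{C}}_i$ is the one dual to $[v_i,w_3]$ (and dually $[v_i,w_4]$ for $T_2$); this is what makes ``collapses to two terms'' legitimate. In the second step your dichotomy is right, but one excluded case deserves a line: an edge of $T_j$ dual to $[v_{i'},w_{2+j}]$ or $[v_{i''},w_{2+j}]$ cannot have its relative interior on $\accentset{\circ}{\mathsf{L}}_i$, because there the values at $v_{i'}$ and $v_{i''}$ coincide and any tie involving one of them is automatically at least a three-way tie, hence a vertex. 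Thus the only $1$-cell of $T_j$ whose relative interior meets $\accentset{\circ}{\mathsf{L}}_i$ is the edge dual to $[v_{i'},v_{i''}]$, which runs along $\mathsf{L}_i$, and your parallel-duals argument applies. The apex $0\in\mathsf{L}_i$ is covered by the same mechanism: either all three common terms tie there (a vertex of $T_j$) or the fourth term strictly dominates and $0\notin T_j$; in neither case is $0$ in the relative interior of an edge of $T_j$, so convexity of a non-transversal cell $\xi$ indeed forces $\xi$ into a single ray $\mathsf{L}_i$, completing your third assertion.
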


\begin{proposition}[\cite{E16}]\label{Prop:TransvToTransv}
Assume that $T_1$ and $T_2$ intersect transversally at a point $v\in\accentset{\circ}{\mathsf{C}}_i$ for some $i\in\{0,1,2\}$. Then $\coef(a_i)\coef(a_3)<0$, $\coef(b_i)\coef(b_4)<0$ iff $v$ is the valuation of a positive solution of~\eqref{eq:mod:s_1}.
\end{proposition}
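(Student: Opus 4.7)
The plan is to reduce the positivity criterion at $v$ to a sign test on a $2\times 2$ binomial system, and then transfer that test back to~\eqref{eq:mod:s_1} via Propositions~\ref{Prop:from:K:toR} and~\ref{Prop:fromR:toK}. The first step is to identify the reduced system at $v$. Because the monomials of~\eqref{eq:mod:s_1} indexed by $w_0,w_1,w_2$ have coefficients of order zero, the tropical polynomial of the first equation is
\begin{equation*}
f_{1,\trop}(x)=\max\bigl\{\langle x,w_0\rangle,\,\langle x,w_1\rangle,\,\langle x,w_2\rangle,\,\langle x,w_3\rangle-\alpha\bigr\}.
\end{equation*}
Since $\mathsf{C}_i$ is the normal cone of the vertex $w_i$ of the triangle $\Delta=\conv(w_0,w_1,w_2)$, we have $\langle v,w_i\rangle>\langle v,w_j\rangle$ strictly for every $j\in\{0,1,2\}\setminus\{i\}$, so the assumption $v\in T_1$ forces the $w_3$-term to tie with, and only with, the $w_i$-term. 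Hence the cell of $T_1$ at $v$ is dual to the edge $[w_i,w_3]$; the same argument produces the dual edge $[w_i,w_4]$ for $T_2$. By Definition~\ref{Def:RedSys}, the reduced system of~\eqref{eq:mod:s_1} at $v$ is the binomial system
\begin{equation*}
\coef(a_i)\,y^{w_i}+\coef(a_3)\,y^{w_3}=0,\qquad \coef(b_i)\,y^{w_i}+\coef(b_4)\,y^{w_4}=0.
\end{equation*}

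Once the reduced system is in this shape, both implications are short. For the $(\Rightarrow)$ direction, if $v=\Val(\alpha,\beta)$ for some positive solution $(\alpha,\beta)\in(\mathbb{RK}_{>0})^2$ of~\eqref{eq:mod:s_1}, then Proposition~\ref{Prop:from:K:toR} gives that $(\coef(\alpha),\coef(\beta))\in(\R_{>0})^2$ solves the reduced system; evaluating each binomial at a positive point, positivity of $y^{w_i},y^{w_3},y^{w_4}$ forces $\coef(a_i)\coef(a_3)<0$ and $\coef(b_i)\coef(b_4)<0$. For the $(\Leftarrow)$ direction, assume both sign conditions. Rewriting the reduced system as $y^{w_3-w_i}=-\coef(a_i)/\coef(a_3)>0$ and $y^{w_4-w_i}=-\coef(b_i)/\coef(b_4)>0$, the transversality of $T_1$ and $T_2$ at $v$ translates into the $\R$-linear independence of $w_3-w_i$ and $w_4-w_i$. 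Therefore the monomial map $(y_1,y_2)\mapsto (y^{w_3-w_i},y^{w_4-w_i})$ is a diffeomorphism of $(\R_{>0})^2$ onto itself, and the reduced system admits a unique positive solution, automatically non-degenerate because the Jacobian is the composition of this monomial map with a diagonal rescaling. Proposition~\ref{Prop:fromR:toK} (which applies since $\dim v=0$) then lifts this non-degenerate real positive solution to a positive solution of~\eqref{eq:mod:s_1} with valuation $v$.

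The only delicate point in this plan is the combinatorial identification of the dual edges in the first step: one must be certain that throughout the relative interior of $\mathsf{C}_i$ the cell of $T_1$ through $v$ is exactly the one dual to $[w_i,w_3]$ (and similarly $[w_i,w_4]$ for $T_2$), with no contribution from the other two order-zero monomials. This is precisely the feature of the normal-fan description set up in Theorem~\ref{Th.transv}, so I would cite it rather than redo the case analysis. After that, the remainder is a one-line sign count on a binomial system together with the two bridge propositions.
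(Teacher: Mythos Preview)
The paper does not prove this proposition; it is quoted from \cite{E16} without argument, so there is no in-paper proof to compare against. Your proposal is a correct and self-contained proof.

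A few remarks on tightening it. Your first paragraph already \emph{proves} the identification of the dual edges: from $v\in\accentset{\circ}{\mathsf{C}}_i$ you get $\langle v,w_i\rangle>\langle v,w_j\rangle$ for $j\in\{0,1,2\}\setminus\{i\}$ strictly, and $v\in T_1$ then forces the unique tie $\langle v,w_i\rangle=\langle v,w_3\rangle-\alpha$. So $v$ lies in the relative interior of the edge of $T_1$ dual to $[w_i,w_3]$, and symmetrically for $T_2$. Your closing paragraph, which calls this the ``only delicate point'' and defers to Theorem~\ref{Th.transv}, is therefore unnecessary; Theorem~\ref{Th.transv} says where transversal intersections can occur, not which edges meet there. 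You can delete that paragraph.

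One hypothesis to flag: Proposition~\ref{Prop:fromR:toK} as stated in the paper assumes that all solutions of the ambient system~\eqref{eq:mod:s_1} are non-degenerate. This genericity is tacitly in force throughout the paper (and is automatic after a small perturbation of $a_3,b_4$), but if you want the $(\Leftarrow)$ direction to stand on its own you should either assume it or note that in the transversal situation the count $\MV([w_i,w_3],[w_i,w_4])=|\det(w_3-w_i,\,w_4-w_i)|$ matches the number of torus solutions of the binomial reduced system, so the $\Coef$ map is a bijection onto that solution set regardless; the unique solution in $(\R_{>0})^2$ then lifts. Your non-degeneracy check for the reduced system (linear independence of $w_3-w_i$ and $w_4-w_i$ from transversality, hence invertible Jacobian of the monomial map on $(\R_{>0})^2$) is exactly right.
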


\begin{figure}[H]
\centering
\includegraphics[scale=0.8]{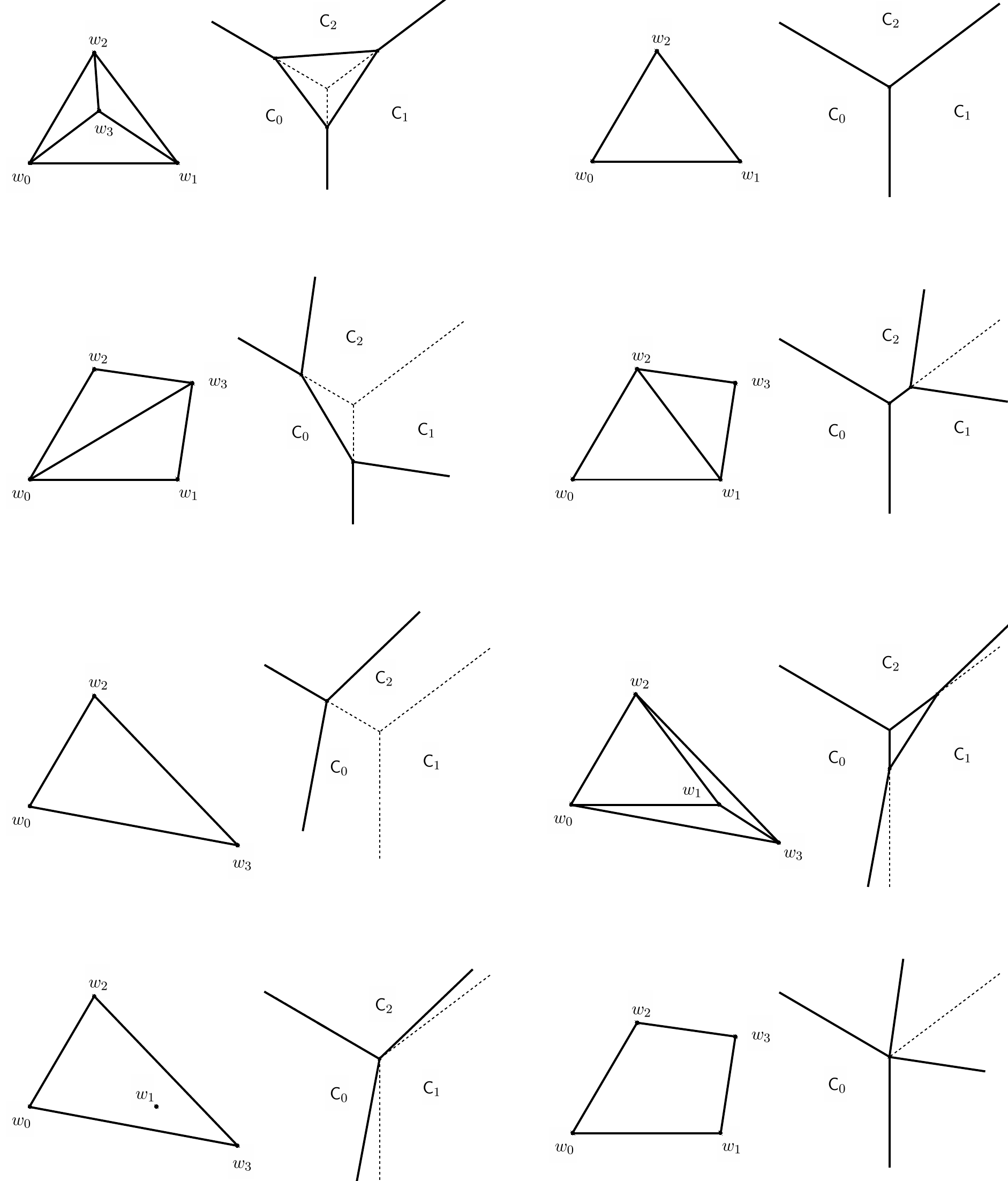}
\caption{Disposition of $T_1$ with respect to the fan $\mathcal{E}$ (together with its dual subdivision). These are all the possible configurations of $T_1$ (up to transformation) with respect to the fan $\mathcal{E}$. Since $T_2$ satisfies similar configurations, Figure~\ref{fig4} gives an idea of why Theorem~\ref{Th.transv} holds true.}
\label{fig4}
\end{figure}

\begin{figure}[H]
\centering
\includegraphics[scale=0.888]{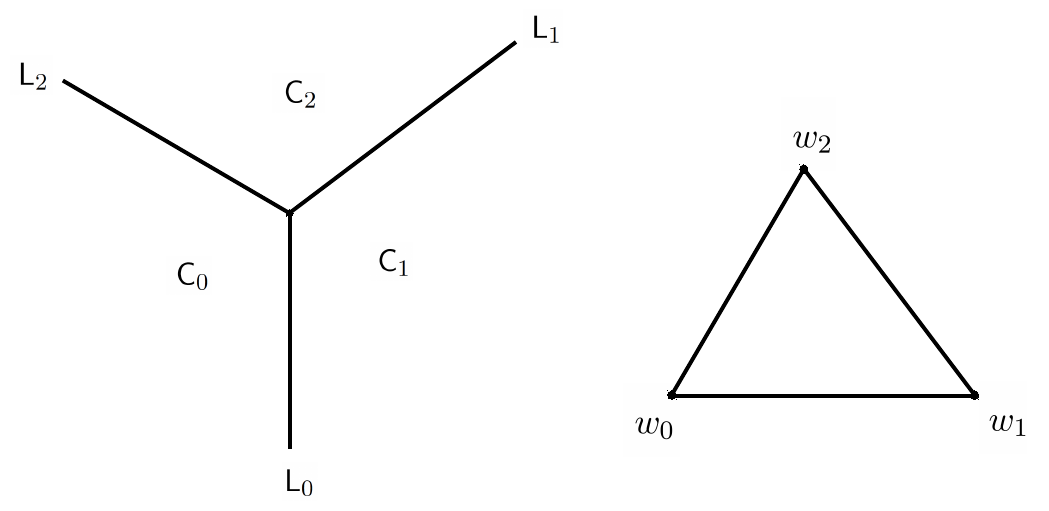}
\caption{The fan $\mathcal{E}$ together with its dual triangle.}
\label{Fig:BaseFans:EandGen}
\end{figure}

\subsection{Construction} In what follows, we construct a system~\eqref{eq:mod:s_1} having seven positive solutions. Theorem 1.16 of~\cite{E16} implies that if $\alpha\neq\beta$ or $\alpha=\beta<0$, then~\eqref{eq:mod:s_1} has at most six positive solutions.  Therefore, assume henceforth that $\alpha=\beta>0$. It is easy to deduce from equations appearing in~\eqref{eq:mod:s_1} that, since $\alpha,\beta\geq 0$, the tropical curves $T_1$ and $T_2$ intersect non-transversally at a point $v_0$ of type (III) that is the origin of $\mathcal{E}$. In order to study the positive solutions of~\eqref{eq:mod:s_1} with valuation $v_0$, we first consider the system \begin{equation}\label{E_2-E_1}
  \begin{array}{lcccl}
    \displaystyle  a_0 + y^{m_1}_1 + a_2y^{m_2}_1y^{n_2}_2 + a_3 t^{\alpha} y^{m_3}_1y^{n_3}_2 & = & 0,
    \\[10pt]
    \displaystyle  c_0t^{\gamma_0} + c_2t^{\gamma_2}y^{m_2}_1y^{n_2}_2 - a_3 t^{\alpha} y^{m_3}_1y^{n_3}_2 + b_4t^{\beta}y^{m_4}_1y^{n_4}_2 & = & 0,\\
  \end{array}
 \end{equation} with $c_it^{\gamma_i} = b_i - a_i$, $\ord(c_i)=0$ and $\gamma_i\geq 0$ for $i=0,2$. Since the second equation of~\eqref{E_2-E_1} is obtained by substracting the first equation of~\eqref{eq:mod:s_1} from its second one, this system has the same number of non-degenerate positive solutions as~\eqref{eq:mod:s_1}. The case-by-case study done in~\cite{E16} shows that we can hope to obtain a system~\eqref{eq:mod:s_1} having seven positive solutions if we have \begin{equation}\label{eq:inequalities1}
\coef(a_i)=\coef(b_i)\text{\quad for\quad}  i=0,2,\text{\quad and\quad} \alpha=\beta=\gamma_2<\gamma_0.
\end{equation} One possible disposition of the seven solutions is the following (see Figure~\ref{SevenPositiveS1}).\\

\begin{itemize}

 \item The common vertex $v_0$ is the valuation of five positive solutions,\\
 
 \item the $2$-cone $\mathsf{C}_2$ of $\mathcal{E}$ contains a transversal intersection $p$, and\\
 
 \item the $1$-cone $\mathsf{L}_0$ of $\mathcal{E}$ contains the valuation $q$ of one positive solution.\\
 
 \end{itemize}

\subsubsection{Reduced system at $v_0$.} Note that from~\eqref{eq:inequalities1} we deduce that the reduced system of~\eqref{E_2-E_1} with respect to $v_0$ is \begin{equation}\label{eq:red:5(0)}
   \begin{array}{lllllll}
    \displaystyle  \coef(a_0) & + &\displaystyle y^{m_1}_1 & + & \displaystyle \coef(a_2)y^{m_2}_1y^{n_2}_2 & = & 0,
    \\[10pt]
 \displaystyle \coef(b_4)y_1^{m_4}y_2^{n_4} & - & \displaystyle \coef(a_3)y_1^{m_3}y_2^{n_3}  & +&\displaystyle  \coef(c_2)y_1^{m_2}y_2^{n_2} & = & 0.
  \end{array}
   \end{equation} Such a system has at most five positive solutions. Indeed, since this is a system of two trinomials in two variables (see~\cite{LRW03}). Without loss of generality, we may assume that $\coef(a_0),\coef(a_3)<0$, and doing a suitable monomial change of coordinates followed by a multiplication of each equation of~\eqref{eq:red:5(0)} by a constant, we assume in addition that $\coef(a_0)=\coef(a_3)=-\coef(a_2)=-1$. Therefore, the reduced system of~\eqref{E_2-E_1} with respect to $\{v_0\}$ is now\begin{equation}\label{eq:red:5}
 \begin{array}{lllllll}
    \displaystyle  -1 & + &\displaystyle y^{m_1}_1 & + & \displaystyle y^{m_2}_1y^{n_2}_2 & = & 0,
    \\[10pt]
 \displaystyle \coef(b_4)y_1^{m_4}y_2^{n_4} & - & \displaystyle y_1^{m_3}y_2^{n_3}  & +&\displaystyle  \coef(c_2)y_1^{m_2}y_2^{n_2} & = & 0.
  \end{array}
\end{equation} Assume that the open $1$-cone $\oc{\mathsf{L}}_0$ of $\mathcal{E}$ contains the valuation of one (which is the maximum possible for this case) positive solution of~\eqref{eq:mod:s_1}. Then both $n_3$ and $n_4$ are positive.  Therefore, since $\alpha>0$, both $T_1$ and $T_2$ do not have a vertex in $\mathsf{L}_0$ (see Figure~\ref{SevenPositiveS1} for example). Assume furthermore that $T_1$ and $T_2$ do not intersect non-transversally at a point of type (III) belonging to the relative interior of a $1$-cone of $\mathcal{E}$.

We start our construction by finding a system~\eqref{eq:red:5} that has five positive solutions. Since systems of two trinomials in two variables having five positive solutions are hard to generate (c.f.~\cite{DRRS07}), we will borrow one from the literature and base our construction upon it. 

First, we define a univariate function $f$ such that for some constant $c$, the equation $f=c$ has the same number of solutions in $]0,1[$ as that of positive solutions of~\eqref{eq:red:5}. We write the first equation of~\eqref{eq:red:5} as $y_2 = x^{k}(1 - x)^{l}$, where $x:=y_1^{m_1}$, $\displaystyle k=-m_2/(m_1 n_2)$ and $l =1/n_2$. It is clear that $y_1,y_2>0\Leftrightarrow x\in I_0:=]0,1[$. Since we are looking for solutions of~\eqref{eq:red:5} with non-zero coordinates, we divide its second equation by $y_1^{m_2}y_2^{n_2}$. Plugging $y_1$ and $y_2$ in the second equation of~\ref{eq:red:5}, we get

 \begin{align} \label{eq:fto5}
   \coef(c_2)  + x^{k_3}(1-x)^{l_3} + \coef(b_4)x^{k_4}(1-x)^{l_4}   =  0,
 \end{align} where $\displaystyle k_i=\frac{m_in_2 - m_2n_i}{m_1n_2}$ and $\displaystyle l_i=\frac{n_i-n_2}{n_2}$ for $i=3,4$. The number of positive solutions of~\eqref{eq:red:5} is equal to the number of solutions of~\eqref{eq:fto5} in $I_0$. Therefore we want to compute values of $\coef(c_2)$, $\coef(b_4)$ and $(m_i,n_i)$ for $i=1,2,3,4$ such that $f(x)=-\coef(c_2)$ has five solutions in $I_0$, where  
\begin{align}
   f(x) :=  x^{k_3}(1-x)^{\l_3} + \coef(b_4)\cdot x^{k_2}(1-x)^{l_2}.
\end{align}

 Note that the function $f$ has no poles in $I_0$, thus by Rolle's theorem we have $\sharp\{x\in I_0\ | f(x)= 1\}\ \leq\sharp\{x\in I_0\ |f'(x)=0\} + 1$. The derivative $f'$ is expressed as $$\displaystyle x^{k_3 -1}(1-x)^{l_3-1}\rho_3(x)  + a_4x^{k_4 -1}(1-x)^{l_4 - 1}\rho_4(x),$$ where $\rho_i(x)=k_i - (k_i +l_i)x$ for $i=3,4$. For $x\in~]0,1[$, we have $\displaystyle f'(x)=0\Leftrightarrow \phi(x)=1$, where \begin{equation}\label{eq:rational}
\phi(x):=- \coef(b_4)\frac{x^{k_4 -k_3}(1-x)^{l_4-l_3}\rho_4(x)}{\rho_3(x)}.
\end{equation} Consider the system 
\begin{align} \label{ex:s_1}
 x^6 + (44/31)y^3 - y  = y^6 + (44/31)x^3 - x  =  0,
\end{align} taken from~\cite{DRRS07}, which has five positive solutions. The rational function ~\eqref{eq:rational}, associated to~\eqref{ex:s_1} is $$\phi_0(x)=(44/31)^{5/6} \cdot \frac{x^{1/6}(1-x)^{1/3}(-11/4 + 9x/4)}{(-35/12 + 11x/4)}.$$ Thus, if

\begin{equation}\label{eq:equalities}
\begin{array}{lllllll}
    \displaystyle  \coef(b_4)=-\left(\frac{44}{31}\right)^{\frac{5}{6}}, & \displaystyle k_4 - k_3 = \frac{1}{6}, &  \displaystyle l_4 - l_3 =\frac{1}{3}, \\[10pt]
 \displaystyle k_4 = -\frac{11}{4} & \text{and} & \displaystyle  k_3 = -\frac{35}{12},
  \end{array}
\end{equation} then $\phi(x)=1$ has four positive solutions in $I_0$. Assume that equalities in~\eqref{eq:equalities} hold true. Plotting the function $f:\R\rightarrow\R$, $x\mapsto f(x)$, we get that the graph of $f$ has four critical points contained in $I_0$ with critical values situated below the $x$-axis. Moreover, this graph intersects transversally the line $\{y=-0.36008\}$ in five points with the first coordinates belonging to $I_0$. Therefore, the equation $f(x)=-0.36008$ has five non-degenerate positive solutions in $I_0$. 

\subsubsection{Choosing the monomials.} In what follows, we find $(m_i,n_i)\in\mathbb{Z}^2$ for $i=1,2,3,4$, satisfying the equalities in~\eqref{eq:equalities} so that~\eqref{eq:red:5} has five non-degenerate positive solutions. Recall that $m_1,n_2>0$ (since~\eqref{eq:mod:s_1} is normalized) and assume that $m_2$ is also positive. The equalities in~\eqref{eq:equalities} show that $l_i>0$, $k_i<0$ and $k_i<l_i$ for $i=3,4$, therefore we have $0<n_2<n_i$, $m_in_2 - n_im_2<0$ and $ (m_i - m_1)n_2 - n_i(m_2-m_1)<0$ for $i=3,4$. Plotting the three points $(0,0)$, $(m_1,0)$ and $(m_2,n_2)$, we deduce from the latter inequalities that the points $(m_3,n_3)$ and $(m_4,n_4)$ belong to the region $B_1$ of Figure~\ref{Fig:RegionB1B11}.
\begin{figure}[H]
\centering
\includegraphics[scale=0.9]{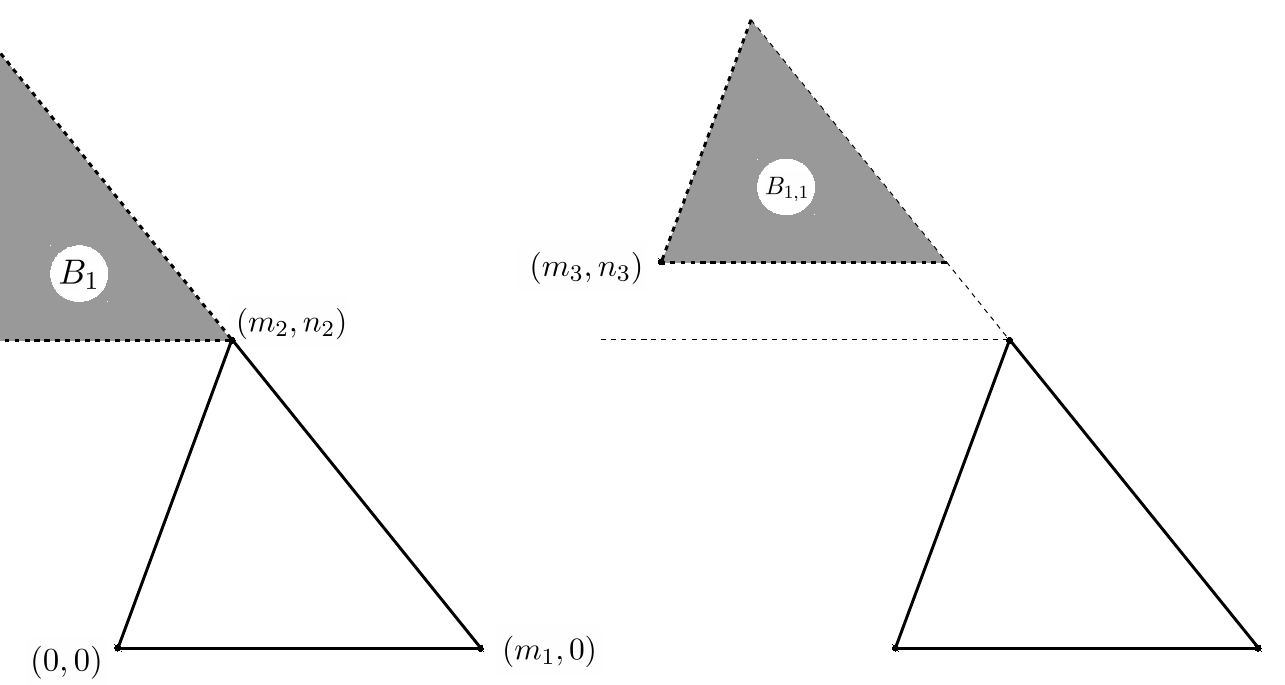}
\caption{The region $B_1$ and triangle $B_{1,1}$}
\label{Fig:RegionB1B11}
\end{figure}

We also deduce from equalities in~\eqref{eq:equalities} that $l_4>l_3$ and $k_4>k_3$, and thus $n_4>n_3$ and $(m_4-m_3)n_2 - (n_4-n_3)m_2>0$. Fixing $(m_3,n_3)$ in the region $B_1$, we obtain that $(m_4,n_4)$ belongs to the triangle $B_{1,1}$ depicted in Figure~\ref{Fig:RegionB1B11}.

 Note that the vertex $v_1\in\mathsf{L}_2$ (resp. $v_2\in\mathsf{L}_2$) of $T_1$ (resp. $T_2$) has coordinates $$\frac{\alpha}{m_3n_2-n_3m_2}(n_2,-m_2)\quad \left(\text{resp.}\quad\frac{\alpha}{m_4n_2-n_4m_2}(n_2,-m_2) \right),$$ and thus from $m_3n_2 - n_3m_2<m_4n_2 - n_4m_2<0$, we deduce that the first coordinate of $v_2$ is smaller than that of $v_1$ (see Figure~\ref{SevenPositiveS1}).
  
All these restrictions impose that there exists a transversal intersection point of $T_1$ and $T_2$ in $\mathsf{C}_2$ (see Figure~\ref{SevenPositiveS1} for example). Moreover, since $\coef(b_4)<0$ (see~\eqref{eq:equalities}), $\coef(a_3)=-1$ (from~\eqref{eq:red:5}) and $\coef(a_0)=\coef(b_0)=-1$, Proposition~\ref{Prop:TransvToTransv} shows that the intersection point $p$ is the valuation of a positive solution of~\eqref{eq:mod:s_1}. The constant $\coef(c_0)$ should be a negative number so that~\eqref{eq:mod:s_1} has a positive solution with valuation in $\mathsf{L}_0$. This constant can take any negative value, and for computational reasons we choose it to be $-0.36008$.
%
  
According to this analysis, a valid choice of exponents and coefficients of~\eqref{eq:mod:s_1} is $m_1=6$, $(m_2,n_2)=(3,6)$, $(m_3,n_3)=(-14,7)$, $(m_4,n_4)=(-12,9)$, $a_0=-1$, $a_2=1$, $a_3=-t^{\alpha}$, $b_0=-1  + 0.36008t^{\gamma_0}$ (verifying $\gamma_0>\alpha$), $b_2=-1+t^{\alpha}$ and $b_4= -\left(44/31\right)^{5/6}t^{\alpha}$. Therefore, the system \begin{equation}\label{part.form.const.7}
  \begin{array}{ccccll}
    \displaystyle  -1 & +~ \displaystyle y_1^{6} & +~\displaystyle y_1^3y_2^6  & -~ \displaystyle t^{\alpha}y_1^{-14}y_2^{7} & =& 0, \\ [6 pt]
    
    \displaystyle  -1  + 0.36008t^{\gamma_0} & +~\displaystyle y_1^{6} & +~\displaystyle (1-0.36008t^{\alpha})y_1^3y_2^6 & -~\displaystyle (44/31)^{\frac{5}{6}}t^{\alpha}y_1^{-12}y_2^9 & =& 0, \\    
  \end{array}
\end{equation} which has seven non-degenerate positive solutions, proves Theorem~\ref{Th:Main}.

\begin{figure}[H]
\centering
\includegraphics[scale=1.49]{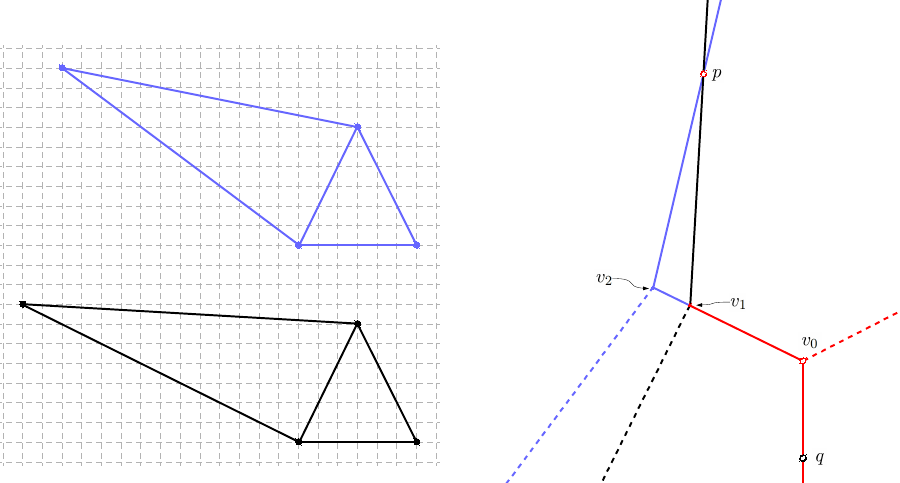}
\caption{Newton polytopes and tropical curves associated to a normalized system having seven positive solutions.}\label{SevenPositiveS1}
\end{figure} 

\subsubsection{A software computation} Using Maple 17 as well as the libraries FGb and RS, Pierre-Jean Spaenlehauer~\cite{Spa16} provided us with a computation he made of the non-degenerate positive solutions of a system~\eqref{part.form.const.7} for $\gamma_0=7$ and $\alpha=1$ that goes as follows. For computational reasons, he has replaced the real number $(44/31)^{5/6}$ in~\eqref{part.form.const.7} by the fraction 

$$\frac{26807502408507435267952730104920543812845885439976}{20022295568917288472920446333489413342983920443429}$$ which approximates $(44/31)^{5/6}$. For $t=1/100~000$, the computer software has found seven positive solutions. An approximation of these solutions goes as follows. 

$$(0.99999,~0.00001),~(0.99171,~0.60681),~(0.96651,~0.76771),~(0.95765,~0.79907),$$

$$(0.95201,~0.81642),~(0.88602,~ 0.95151),~(0.53645,~1.61099).$$

\bibliographystyle{alpha}					   

\bibliography{Biblio}          

\end{document}